\title{A first-order logic for string diagrams}
\titlerunning{First-order logic for string diagrams} 
\author[1]{Aleks Kissinger}
\author[2]{David Quick}
\affil[1,2]{Department of Computer Science\\
  University of Oxford, UK\\
  \texttt{\{aleks.kissinger,david.quick\}@cs.ox.ac.uk}}
\authorrunning{A. Kissinger and D. Quick} 
\subjclass{D.3.1 Formal Definitions and Theory, F.4.1 Mathematical Logic}
\keywords{string diagrams, compact closed monoidal categories, abstract tensor systems, first-order logic}
\tikzstyle{every picture}=[baseline=-0.25em,scale=0.6]
\tikzstyle{small box}=[rectangle,draw,fill=white,minimum width=3mm,minimum height=3mm]
\tikzstyle{empty diagram}=[draw=gray!40!white,dashed,shape=rectangle,minimum width=8mm,minimum height=8mm]
\tikzstyle{dotpic}=[scale=0.6]
\tikzstyle{directeds}=[every to/.style={directed}]
\tikzstyle{dot graph}=[shorten <=-0.1mm,shorten >=-0.1mm,scale=0.6]
\tikzstyle{digraph}=[-latex]
\tikzstyle{plot point}=[circle,fill=black,minimum width=2mm,inner sep=0]
\tikzstyle{string graph}=[scale=0.6]
\tikzstyle{sg directed}=[-stealth]
\tikzstyle{rewrite edge}=[-open triangle 45]
\tikzstyle{sg bold directed}=[-stealth,thick,shorten >=-1pt]
\tikzstyle{sg vertex}=[circle,minimum width=2.2mm,fill=white,draw=black,inner sep=0mm]
\tikzstyle{labelled sg vertex}=[circle,minimum width=7mm,fill=white,draw=black,inner sep=0mm]
\tikzstyle{sg grey vertex}=[sg vertex,fill=gray!30!white]
\tikzstyle{sg black vertex}=[sg vertex,fill=black]
\tikzstyle{sg bold vertex}=[circle,minimum width=2.2mm,fill=white,draw=black,very thick,inner sep=0mm]
\tikzstyle{sg wire vertex}=[circle,minimum width=1mm,fill=black,inner sep=0mm]
\tikzstyle{tick vertex}=[rectangle,fill=black,minimum height=1mm,minimum width=2.5mm,inner sep=0mm]
\tikzstyle{braceedge}=[decorate,decoration={brace,amplitude=2mm,raise=-1mm}]
\tikzstyle{small braceedge}=[decorate,decoration={brace,amplitude=1mm,raise=-1mm}]
\tikzstyle{left hook arrow}=[left hook-latex]
\tikzstyle{right hook arrow}=[right hook-latex]
\tikzstyle{dot}=[inner sep=0.7mm,minimum width=0pt,minimum height=0pt,fill=black,draw=black,shape=circle]
\tikzstyle{white dot}=[dot,fill=white]
\tikzstyle{alt white dot}=[white dot,label={[xshift=2.9mm,yshift=-0.1mm]left:$\cdot$}]
\tikzstyle{gray dot}=[dot,fill=gray!50]
\tikzstyle{box vertex}=[draw=black,rectangle]
\tikzstyle{whitebg}=[fill=white,inner sep=2pt]
\tikzstyle{graph state vertex}=[sg vertex,fill=black]
\tikzstyle{wide point}=[fill=white,draw=black,shape=isosceles triangle,shape border rotate=90,isosceles triangle stretches=true,inner sep=1pt,minimum width=1.5cm,minimum height=5mm]
\tikzstyle{wide copoint}=[fill=white,draw=black,shape=isosceles triangle,shape border rotate=-90,isosceles triangle stretches=true,inner sep=1pt,minimum width=1.5cm,minimum height=5mm]
\tikzstyle{symm}=[ultra thick,shorten <=-1mm,shorten >=-1mm]
\tikzstyle{square box}=[rectangle,fill=white,draw=black,minimum height=6mm,minimum width=6mm]
\tikzstyle{square gray box}=[rectangle,fill=gray!30,draw=black,minimum height=6mm,minimum width=6mm]
\tikzstyle{point}=[regular polygon,regular polygon sides=3,draw=black,scale=0.75,inner sep=-0.5pt,minimum width=7mm,fill=white]
\tikzstyle{copoint}=[point,regular polygon rotate=180,fill=white]
\tikzstyle{gray point}=[point,fill=gray!40!white]
\tikzstyle{gray copoint}=[copoint,fill=gray!40!white]
\tikzstyle{open graph}=[baseline=-0.25em]
\tikzstyle{greybg}=[background rectangle/.style={fill=black!5,draw=black!30,rounded corners=1ex}, show background rectangle]
\tikzstyle{edge point}=[circle,minimum width=1mm,fill=black,inner sep=0mm]
\tikzstyle{vertex point}=[circle,minimum width=2.2mm,fill=white,draw=black,inner sep=0mm]
\tikzstyle{gray vertex point}=[circle,minimum width=2.2mm,fill=gray!30!white,draw=black,inner sep=0mm]
\tikzstyle{edge label}=[inner sep=2pt, font=\small]
\tikzstyle{on edge label}=[fill=white, font=\footnotesize, inner sep=1 pt]
\newcommand{\edgearrow}{{\arrow[black]{>}}}
\newcommand{\edgetick}{{\arrow[black,scale=0.7,very thick]{|}}}
\tikzstyle{directed}=[postaction=decorate,decoration={markings, mark=at position 0.55 with \edgearrow}]
\tikzstyle{medium directed}=[postaction=decorate,decoration={markings, mark=at position 0.75 with \edgearrow}]
\tikzstyle{short directed}=[->]
\tikzstyle{halfedge}=[-)]
\tikzstyle{other halfedge}=[(-]
\tikzstyle{freeedge}=[(-)]
\tikzstyle{white edge}=[line width=5pt,white]
\tikzstyle{tick}=[postaction=decorate,decoration={markings, mark=at position 0.5 with \edgetick}]
\tikzstyle{small map edge}=[|-latex, gray!60!blue, shorten <=0.9mm, shorten >=0.5mm]
\tikzstyle{thick dashed edge}=[very thick,dashed,gray!40]
\tikzstyle{dashed edge}=[densely dotted,thick]
\tikzstyle{map edge}=[|-latex,very thick, gray!40, shorten <=1mm, shorten >=0.5mm]
\tikzstyle{tickedge}=[postaction=decorate,
\tikzstyle{dirtickedge}=[postaction=decorate,
\tikzstyle{dirdoubletickedge}=[postaction=decorate,
\tikzstyle{arrs}=[-latex,font=\small,auto]
\tikzstyle{arrow plain}=[arrs]
\tikzstyle{arrow dashed}=[dashed,arrs]
\tikzstyle{arrow bold}=[very thick,arrs]
\tikzstyle{arrow hide}=[draw=white!0,-]
\tikzstyle{arrow reverse}=[latex-]
\tikzstyle{cdnode}=[]
\tikzstyle{cnot}=[fill=white,shape=circle,inner sep=-1.4pt]
\tikzstyle{bang box}=[draw=black,dashed,minimum height=12mm,minimum width=12mm,fill=gray!20]
\tikzstyle{wire label}=[font=\footnotesize, auto]
\theoremstyle{definition}\newtheorem{notation}[theorem]{Notation}
\tikzstyle{cdiag}=[matrix of math nodes, row sep=3em, column sep=3em, text height=1.5ex, text depth=0.25ex,inner sep=0.5em]
\tikzstyle{arrow above}=[transform canvas={yshift=0.5ex}]
\tikzstyle{arrow below}=[transform canvas={yshift=-0.5ex}]
\def\bR{\begin{color}{red}}
\def\bB{\begin{color}{blue}}
\def\bM{\begin{color}{magenta}}
\def\bC{\begin{color}{cyan}}
\def\bW{\begin{color}{white}}
\def\bBl{\begin{color}{black}}
\def\bG{\begin{color}{green}}
\def\bY{\begin{color}{yellow}}
\def\bDG{\begin{color}{green!70!black}}
\def\e{\end{color}}
\gdef\scalefactor{1.0}\begin{center}\proofSkipAmount \leavevmode}%
\scalebox{\scalefactor}{\DisplayProof}\proofSkipAmount \end{center} }
\begin{document}

\maketitle


\begin{abstract}

Equational reasoning with string diagrams provides an intuitive means of proving equations between morphisms in a symmetric monoidal category. This can be extended to proofs of infinite families of equations using a simple graphical syntax called !-box notation. While this does greatly increase the proving power of string diagrams, previous attempts to go beyond equational reasoning have been largely ad hoc, owing to the lack of a suitable logical framework for diagrammatic proofs involving !-boxes. In this paper, we extend equational reasoning with !-boxes to a fully-fledged first order logic called with conjunction, implication, and universal quantification over !-boxes. This logic, called !L, is then rich enough to properly formalise an induction principle for !-boxes. We then build a standard model for !L and give an example proof of a theorem for non-commutative bialgebras using !L, which is unobtainable by equational reasoning alone.
\end{abstract}


\section{Introduction}\label{sec:introduction}

Many processes come with natural notions of parallel and sequential composition. In such cases, it is advantageous to switch from traditional term-based (i.e.~one-dimensional) syntax to the two-dimensional syntax of string diagrams. This diagrams, which consist of boxes (or various other shapes) connected by wires, form a sound and complete language for compositions of morphisms in a monoidal category~\cite{JS}. Recently, the use of string diagrams has gained much interest in a wide variety of areas, including categorical quantum mechanics~\cite{CD1,ContPhys,CDKZ}, computational linguistics~\cite{DimitriDPhil} and control theory~\cite{SobocinskiSignal,Baez2014a}.

What many of these applications have in common is they make extensive use of equational reasoning for string diagrams. That is, proofs are constructed by starting with a fixed set of diagram equations, e.g.
\ctikzfig{eq-example}
and using those to construct new equations by substitution of sub-diagrams. For example, the following is a derivation making use of the four rules above:
\ctikzfig{proof-example}
However, to prove more powerful theorems, one often needs to pass from statements about single diagrams  to entire families of diagrams and diagram equations. One way to do this, while staying within the realm of string diagrams is to use !-box notation (pronounced `bang-box notation'), introduced in~\cite{DixonDuncan2008} and formalised in~\cite{PatternGraph}. In this notation, certain sub-diagrams are wrapped in boxes, which mean `repeat this sub-diagram any number of times'. For example, suppose we considered a family of `copy' operations with 1 input and $n$ outputs. Then, if we had some other map with just a single output, we might ask that connecting it to the $n$-fold `copy' results in $n$ copies. We can represent this family of rules using !-box notation as follows:
\begin{equation}\label{eq:n-copy}
  \hfill
\beginpgfgraphicnamed{n-copy}
\begin{tikzpicture}[dotpic]
	\begin{pgfonlayer}{nodelayer}
		\node [style=white dot] (0) at (-1.5, -1.25) {};
		\node [style=gray dot] (1) at (-1.5, 0) {};
		\node [style=wire] (2) at (-2.5, 1.25) {};
		\node [style=wire] (3) at (-0.5, 1.25) {};
		\node [style=none] (4) at (-1.5, 1) {...};
		\node [style=none] (5) at (0.5, 0) {$=$};
		\node [style=wire] (6) at (2, 1.25) {};
		\node [style=none] (7) at (2.75, 0.75) {...};
		\node [style=wire] (8) at (3.5, 1.25) {};
		\node [style=white dot] (9) at (2, 0) {};
		\node [style=white dot] (10) at (3.5, 0) {};
	\end{pgfonlayer}
	\begin{pgfonlayer}{edgelayer}
		\draw [style=directed] (0) to (1);
		\draw [style=directed, bend left=15, looseness=1.00] (1) to (2);
		\draw [style=directed, bend right=15, looseness=1.00] (1) to (3);
		\draw [style=directed] (9) to (6);
		\draw [style=directed] (10) to (8);
	\end{pgfonlayer}
\end{tikzpicture}}
\endpgfgraphicnamed  \qquad \leadsto \qquad %
\beginpgfgraphicnamed{n-copy-bbox}
\begin{tikzpicture}[dotpic]
	\begin{pgfonlayer}{nodelayer}
		\node [style=white] (0) at (2, 0) {};
		\node [style=none] (1) at (1.5, -0.5) {};
		\node [style=wire] (2) at (-1.25, 1.25) {};
		\node [style=bbox, label={$A$}] (3) at (1.5, 1.75) {};
		\node [style=none] (4) at (-1.75, 0.75) {};
		\node [style=none] (5) at (-0.75, 1.75) {};
		\node [style=white] (6) at (-1.25, -1.5) {};
		\node [style=bbox, label={$A$}] (7) at (-1.75, 1.75) {};
		\node [style=wire] (8) at (2, 1.25) {};
		\node [style=none] (9) at (2.5, 1.75) {};
		\node [style=none] (10) at (2.5, -0.5) {};
		\node [style=none] (11) at (-0.75, 0.75) {};
		\node [style=gray] (12) at (-1.25, -0.25) {};
		\node [style=none] (13) at (0.25, 0) {$=$};
	\end{pgfonlayer}
	\begin{pgfonlayer}{edgelayer}
		\draw [style=directed] (6) to (12);
		\draw [style=directed, arcout={{{}{0}{45}{1.5mm}{1}}}] (12) to (2);
		\draw [style=directed] (0) to (8);
		\draw [style=boxedge] (7) to (5.center);
		\draw [style=boxedge] (5.center) to (11.center);
		\draw [style=boxedge] (11.center) to (4.center);
		\draw [style=boxedge] (4.center) to (7);
		\draw [style=boxedge] (3) to (9.center);
		\draw [style=boxedge] (9.center) to (10.center);
		\draw [style=boxedge] (10.center) to (1.center);
		\draw [style=boxedge] (1.center) to (3);
	\end{pgfonlayer}
\end{tikzpicture}}
\endpgfgraphicnamed
  \hfill
\end{equation}
Whereas the expression on the left is informal, the expression on the right defines a family of equations without ambiguity. Formally, a !-box rule represents a set of string diagram rules obtained by \textit{instantiating} the !-box, which essentially amounts fixing the number of times to copy each !-box. For example, the instances of the !-box rule above are precisely the ones we meant to capture with the informal expression:
\ctikzfig{n-copy-bbox-inst2}
where the `blank space' in the first equation represents the monoidal unit $I$. We can even use this more expressive notation to make recursive definitions. For instance, we could recursively define the $n$-fold copy operation as a tree of binary copy operations:
\begin{equation}\label{eq:n-copy-rec}
  \hfill
\beginpgfgraphicnamed{n-copy-rec}
\begin{tikzpicture}[dotpic]
	\begin{pgfonlayer}{nodelayer}
		\node [style=wire] (0) at (-2.5, 1.25) {};
		\node [style=none] (1) at (-3, 0.75) {};
		\node [style=none] (2) at (-2, 1.75) {};
		\node [style=wire] (3) at (-1.75, -1.75) {};
		\node [style=bbox, label={$A$}] (4) at (-3, 1.75) {};
		\node [style=none] (5) at (-2, 0.75) {};
		\node [style=gray] (6) at (-1.75, -0.5) {};
		\node [style=none] (7) at (0, 0) {$=$};
		\node [style=wire] (8) at (-1, 1.25) {};
		\node [style=none] (9) at (2.25, 1.75) {};
		\node [style=wire] (10) at (2.5, -1.75) {};
		\node [style=gray dot] (11) at (2.5, -1) {};
		\node [style=none] (12) at (2.25, 0.75) {};
		\node [style=bbox, label={$A$}] (13) at (1.25, 1.75) {};
		\node [style=wire] (14) at (1.75, 1.25) {};
		\node [style=none] (15) at (1.25, 0.75) {};
		\node [style=wire] (16) at (3.25, 1.25) {};
		\node [style=gray] (17) at (1.75, -0.25) {};
	\end{pgfonlayer}
	\begin{pgfonlayer}{edgelayer}
		\draw [style=directed] (3) to (6);
		\draw [style=directed, arcout={{{}{0}{45}{1.5mm}{1}}}] (6) to (0);
		\draw [style=boxedge] (4) to (2.center);
		\draw [style=boxedge] (2.center) to (5.center);
		\draw [style=boxedge] (5.center) to (1.center);
		\draw [style=boxedge] (1.center) to (4);
		\draw [style=directed] (6) to (8);
		\draw [style=directed] (10) to (11);
		\draw [style=boxedge] (13) to (9.center);
		\draw [style=boxedge] (9.center) to (12.center);
		\draw [style=boxedge] (12.center) to (15.center);
		\draw [style=boxedge] (15.center) to (13);
		\draw [style=directed] (11) to (16);
		\draw [style=directed] (11) to (17);
		\draw [style=directed, arcout={{{}{0}{45}{1.5mm}{1}}}] (17) to (14);
	\end{pgfonlayer}
\end{tikzpicture}}
\endpgfgraphicnamed  \qquad \textrm{where} \qquad %
\beginpgfgraphicnamed{bin-copy}
\begin{tikzpicture}[dotpic]
	\begin{pgfonlayer}{nodelayer}
		\node [style=wire] (0) at (-2.5, 1.25) {};
		\node [style=white dot] (1) at (-1.75, -1.25) {};
		\node [style=gray dot] (2) at (-1.75, 0) {};
		\node [style=none] (3) at (0, 0) {$=$};
		\node [style=wire] (4) at (-1, 1.25) {};
		\node [style=wire] (5) at (1.75, 1.25) {};
		\node [style=white dot] (6) at (1.75, 0) {};
		\node [style=white dot] (7) at (3, 0) {};
		\node [style=wire] (8) at (3, 1.25) {};
	\end{pgfonlayer}
	\begin{pgfonlayer}{edgelayer}
		\draw [style=directed] (1) to (2);
		\draw [style=directed] (2) to (0);
		\draw [style=directed] (2) to (4);
		\draw [style=directed] (6) to (5);
		\draw [style=directed] (7) to (8);
	\end{pgfonlayer}
\end{tikzpicture}}
\endpgfgraphicnamed
  \hfill\qquad
\end{equation}
Using just equational reasoning, there is no way to get from the equations in~\eqref{eq:n-copy-rec} to the $n$-fold copy equation~\eqref{eq:n-copy}. However, if we introduce an induction principle:
\ctikzfig{white_copy_induct}
we can split into a base case (zero copies of the !-box) and a step case ($n$ copies implies $n + 1$ copies). Taking the base case as given, we can prove the step case using the induction hypothesis and the rules in~\eqref{eq:n-copy-rec}:
\ctikzfig{step_case}
Unfortunately, this doesn't quite work. If we interpret $\rightarrow$ to mean `the rule on the left can be used in the proof of the rule on the right', the step case is vacuous. The rule on the right is \textit{already} an instance of the rule on the left. This is a bit like saying: $(\forall n . P n) \rightarrow (\forall n . P (n + 1))$, which is of course true for any $P$.

The problem is, when we pass to !-box notation, where single diagram rules now represent whole families of rules, our existing reasoning tools do not provide enough control over instances of rules, and how those instances interact with each other. This problem was solved for the specific case of induction in~\cite{MerryThesis} using an operation called \textit{fixing}, which essentially freezes a !-box so it can't be instantiated. However, this was introduced more as a stopgap, until a proper logic could be developed, suitable for handling conjunction, implication, and crucially universal quantification over !-boxes. In this paper, we develop that logic. With this new \textit{!-logic} in hand, we can correct our failed attempt at induction to:
\[
\hfill
\left( %
\beginpgfgraphicnamed{white_copy_base}
\begin{tikzpicture}[dotpic]
	\begin{pgfonlayer}{nodelayer}
		\node [style=gray] (0) at (-1, 0.5) {};
		\node [style=white] (1) at (-1, -0.5) {};
		\node [style=none] (2) at (0, 0) {$=$};
		\node [style=empty diagram] (3) at (2, 0) {};
	\end{pgfonlayer}
	\begin{pgfonlayer}{edgelayer}
		\draw [style=directed] (1) to (0);
	\end{pgfonlayer}
\end{tikzpicture}}
\endpgfgraphicnamed \, \right) \wedge
\left( {\color{blue} \forall A . } %
\beginpgfgraphicnamed{white_copy_step}
\begin{tikzpicture}[dotpic]
	\begin{pgfonlayer}{nodelayer}
		\node [style=wire] (0) at (-5.75, 1.25) {};
		\node [style=none] (1) at (-6.25, 0.75) {};
		\node [style=bbox, label={$A$}] (2) at (-2.75, 1) {};
		\node [style=bbox, label={$A$}] (3) at (-6.25, 1.75) {};
		\node [style=gray] (4) at (-5.75, 0) {};
		\node [style=white] (5) at (-5.75, -1) {};
		\node [style=none] (6) at (-5.25, 1.75) {};
		\node [style=white] (7) at (-2.25, -0.25) {};
		\node [style=none] (8) at (-1.75, -0.75) {};
		\node [style=none] (9) at (-1.75, 1) {};
		\node [style=none] (10) at (-4.25, 0) {$=$};
		\node [style=none] (11) at (-5.25, 0.75) {};
		\node [style=none] (12) at (-2.75, -0.75) {};
		\node [style=wire] (13) at (-2.25, 0.75) {};
		\node [style=none] (14) at (0, 0) {$\rightarrow$};
		\node [style=white] (15) at (2.75, -1) {};
		\node [style=gray] (16) at (2.75, 0) {};
		\node [style=none] (17) at (4.25, 0) {$=$};
		\node [style=wire] (18) at (2, 1.25) {};
		\node [style=wire] (19) at (6.25, 0.75) {};
		\node [style=white] (20) at (6.25, -0.25) {};
		\node [style=wire] (21) at (3.5, 1.25) {};
		\node [style=white] (22) at (7.5, -0.25) {};
		\node [style=wire] (23) at (7.5, 0.75) {};
		\node [style=bbox, label={$A$}] (24) at (5.75, 1) {};
		\node [style=none] (25) at (1.5, 0.75) {};
		\node [style=none] (26) at (2.5, 0.75) {};
		\node [style=none] (27) at (2.5, 1.75) {};
		\node [style=none] (28) at (5.75, -0.75) {};
		\node [style=none] (29) at (6.75, -0.75) {};
		\node [style=none] (30) at (6.75, 1) {};
		\node [style=bbox, label={$A$}] (31) at (1.5, 1.75) {};
	\end{pgfonlayer}
	\begin{pgfonlayer}{edgelayer}
		\draw [style=directed] (5) to (4);
		\draw [style=right arcout] (4) to (0);
		\draw [style=directed] (7) to (13);
		\draw [style=directed] (15) to (16);
		\draw [style=right arcout] (16) to (18);
		\draw [style=directed] (20) to (19);
		\draw [style=directed] (16) to (21);
		\draw [style=directed] (22) to (23);
		\draw [style=boxedge] (3) to (6.center);
		\draw [style=boxedge] (6.center) to (11.center);
		\draw [style=boxedge] (11.center) to (1.center);
		\draw [style=boxedge] (1.center) to (3);
		\draw [style=boxedge] (2) to (9.center);
		\draw [style=boxedge] (9.center) to (8.center);
		\draw [style=boxedge] (8.center) to (12.center);
		\draw [style=boxedge] (12.center) to (2);
		\draw [style=boxedge] (27.center) to (26.center);
		\draw [style=boxedge] (29.center) to (28.center);
		\draw [style=boxedge] (31) to (27.center);
		\draw [style=boxedge] (30.center) to (29.center);
		\draw [style=boxedge] (28.center) to (24);
		\draw [style=boxedge] (26.center) to (25.center);
		\draw [style=boxedge] (25.center) to (31);
		\draw [style=boxedge] (24) to (30.center);
	\end{pgfonlayer}
\end{tikzpicture}}
\endpgfgraphicnamed \right)
\rightarrow
\left( {\color{blue} \forall A . } %
\beginpgfgraphicnamed{n-copy-bbox}
\begin{tikzpicture}[dotpic]
	\begin{pgfonlayer}{nodelayer}
		\node [style=white] (0) at (2, 0) {};
		\node [style=none] (1) at (1.5, -0.5) {};
		\node [style=wire] (2) at (-1.25, 1.25) {};
		\node [style=bbox, label={$A$}] (3) at (1.5, 1.75) {};
		\node [style=none] (4) at (-1.75, 0.75) {};
		\node [style=none] (5) at (-0.75, 1.75) {};
		\node [style=white] (6) at (-1.25, -1.5) {};
		\node [style=bbox, label={$A$}] (7) at (-1.75, 1.75) {};
		\node [style=wire] (8) at (2, 1.25) {};
		\node [style=none] (9) at (2.5, 1.75) {};
		\node [style=none] (10) at (2.5, -0.5) {};
		\node [style=none] (11) at (-0.75, 0.75) {};
		\node [style=gray] (12) at (-1.25, -0.25) {};
		\node [style=none] (13) at (0.25, 0) {$=$};
	\end{pgfonlayer}
	\begin{pgfonlayer}{edgelayer}
		\draw [style=directed] (6) to (12);
		\draw [style=directed, arcout={{{}{0}{45}{1.5mm}{1}}}] (12) to (2);
		\draw [style=directed] (0) to (8);
		\draw [style=boxedge] (7) to (5.center);
		\draw [style=boxedge] (5.center) to (11.center);
		\draw [style=boxedge] (11.center) to (4.center);
		\draw [style=boxedge] (4.center) to (7);
		\draw [style=boxedge] (3) to (9.center);
		\draw [style=boxedge] (9.center) to (10.center);
		\draw [style=boxedge] (10.center) to (1.center);
		\draw [style=boxedge] (1.center) to (3);
	\end{pgfonlayer}
\end{tikzpicture}}
\endpgfgraphicnamed \  \right)
\hfill\qquad
\]

In addition to giving a solid foundation for proofs constructed using !-boxes, a major motivating factor for the development of a formal logic of !-boxes is its implementation in the proof assistant Quantomatic~\cite{quanto-cade}. Currently, Quantomatic supports pure equational reasoning on string diagrams with !-boxes. The implementation of !-logic will allow it to support diagrammatic versions of all the usual trappings of a fully-featured proof assistant, such as local assumptions, goal-driven (i.e. backward) reasoning, and of course inductive proofs.

There are two essentially equivalent ways to formalise string diagrams with !-boxes: one combinatoric (as in the original formulation) and one syntactic, building on the \textit{tensor notation} for compact closed categories~\cite{NoncommBB-long}. Here we opt for the latter, as it more conveniently fits into the presentation of the logic and provides a means of elegantly representing commutative \textit{and non-commutative} generators. We begin by reviewing compact closed categories, tensor notation, and !-tensors in Section~\ref{sec:prelims}. Next, we define the concept of an instantiation, which will play a central role in the logic in Section~\ref{sec:compat}. We introduce the syntax of our logic, namely \textit{!-formulas}, in Section~\ref{sec:!-logic_formulas} and give the rules of the logic in Section~\ref{sec:rules}. We provide a semantics for !-formulas based on sets of instantiations evaluated in a compact closed category $\mathcal C$ in Section~\ref{sec:semantics}. We conclude by exhibiting a non-trivial proof involving non-commutative bialgebras, which can be done entirely within !L and diagram rewriting.


\section{Preliminaries}\label{sec:prelims}

\subsection{Compact closed categories and signatures}

Throughout this paper, we will work with \textit{compact closed categories}, i.e. symmetric monoidal categories where every object $X$ has a \textit{dual} object $X^*$ and two morphisms $\eta_X: I \to X^* \otimes X$, $\epsilon_X: X \otimes X^* \to I$ satisfying the \textit{yanking equations}:
\[
\hfill
(\epsilon_X \otimes 1_{X}) \circ (1_{X} \otimes \eta_X) = 1_{X}
\qquad\qquad
(1_{X^*} \otimes \epsilon_X) \circ (\eta_X \otimes 1_{X^*}) = 1_{X^*}
\hfill\qquad
\]
For simplicity, we will focus on \textit{strict} compact closed categories, where associativity and unitality of $\otimes$ hold on-the-nose. However, all of the concepts we will use in this paper go through virtually unmodified by Mac Lane's coherence theorem.

As string diagrams, we will depict $X$ as a wire directed upwards, and $X^*$ as a wire directed downwards. Thus $\eta_X$ and $\epsilon_X$ can be depicted as half-turns:
\[
\hfill
\eta_X \ =\ %
\beginpgfgraphicnamed{cup}
\begin{tikzpicture}
	\begin{pgfonlayer}{nodelayer}
		\node [style=wire] (0) at (-0.75, 0.25) {};
		\node [style=wire] (1) at (0.75, 0.25) {};
	\end{pgfonlayer}
	\begin{pgfonlayer}{edgelayer}
		\draw [style=directed, in=-90, out=-90, looseness=1.75] (0) to (1);
	\end{pgfonlayer}
\end{tikzpicture}}
\endpgfgraphicnamed
\qquad\qquad
\epsilon_X \ =\ %
\beginpgfgraphicnamed{capp}
\begin{tikzpicture}
	\begin{pgfonlayer}{nodelayer}
		\node [style=wire] (0) at (-0.75, -0.5) {};
		\node [style=wire] (1) at (0.75, -0.5) {};
	\end{pgfonlayer}
	\begin{pgfonlayer}{edgelayer}
		\draw [style=directed, in=90, out=90, looseness=1.75] (0) to (1);
	\end{pgfonlayer}
\end{tikzpicture}}
\endpgfgraphicnamed
\hfill\qquad
\]
which we typically call `cups' and `caps', respectively. Using this notation, the yanking equations resemble their namesake:
\ctikzfig{yanking}

One consequence of the inclusion of cups and caps is that we can now introduce `feedback loops', allowing us to make sense of arbitrary string diagrams, not just directed acyclic ones. A second consequence is that any map $f : X \to Y$ can be equivalently represented as a map of the form $\widetilde f : I \to X^* \otimes Y$ just by `bending' the input up to be an output:
\ctikzfig{bending}
Thus, we will always assume that our generating morphisms can be written in the form $\phi: I \to X_1 \otimes X_2 \otimes \ldots \otimes X_n$ for objects $X_1,X_2,\ldots,X_n$. A morphism whose domain is the monoidal unit is called a \textit{point}.

\begin{definition}
  A \textit{compact closed signature} $\Sigma$ consists of a set $\mathcal O := \{ x, y, \ldots \}$ and a set $\mathcal M$ of pairs $(\psi, w)$, where $w$ is a word in $\{ x, x^*, y, y^*, \ldots \}$. If $\psi$ occurs precisely once in $\mathcal M$, it is said to have \textit{fixed arity}, otherwise it has \textit{variable arity}.
\end{definition}

\begin{definition}
  For a compact closed category $\mathcal C$, a \textit{valuation} $\llbracket - \rrbracket : \Sigma \to \mathcal C$ is a choice of object $X \in \textrm{ob}\,\mathcal C$ for every $x \in \mathcal O$, and a choice of point $\llbracket \psi \rrbracket : I \to X_1 \otimes X_2^* \otimes \ldots \otimes X_n$ for every $(\psi, x_1 x_2^* \ldots x_n) \in \mathcal M$.
\end{definition}

When there can be no confusion, we write pairs $(\psi, x_1 x_2^* \ldots x_n)$ also as $\psi : X_1 \otimes X_2^* \otimes \ldots X_n$. As usual, the free compact closed category $\textrm{Free}(\Sigma)$ is characterised by the universal property that any valuation lifts uniquely to functor $\llbracket - \rrbracket : \textrm{Free}(\Sigma) \to \mathcal C$ preserving all of the compact closed structure. In the next section, we will give a convenient syntactic presentation of this category.

\subsection{Tensor notation for compact closed categories}\label{sec:tensors}

From now on, we will assume that $\Sigma$ only has one object $X$, so morphisms will be maps from $I$ to monoidal products of $X$ and $X^*$.

Suppose that we have two generators in $\Sigma$, $\phi : I \to X \otimes X \otimes X^* \otimes X^* \otimes X^*$ and $\psi : I \to X \otimes X^* \otimes X^*$. Diagrammatically we will depict these generators as circular nodes with the edges ordered clockwise around the node. To avoid ambiguity we place a tick on the node between the last and first edge. We will name free edges so they can be referred to when manipulating diagrams. Hence the generators in our example (with arbitrarily named edges) are:

\begin{equation}\label{eq:nodes_with_ticks}
\hfill
\beginpgfgraphicnamed{phi-point}
\begin{tikzpicture}
	\begin{pgfonlayer}{nodelayer}
		\node [style=none] (0) at (0, -0.75) {$\phi$};
		\node [style=none] (1) at (-2, -0.25) {};
		\node [style=none] (2) at (2, -0.25) {};
		\node [style=none] (3) at (0, -1.5) {};
		\node [style=wire, label={above:$a$}] (4) at (-1.5, 0.5) {};
		\node [style=wire, label={above:$b$}] (5) at (-0.75, 0.5) {};
		\node [style=wire, label={above:$c$}] (6) at (0, 0.5) {};
		\node [style=wire, label={above:$d$}] (7) at (0.75, 0.5) {};
		\node [style=wire, label={above:$e$}] (8) at (1.5, 0.5) {};
		\node [style=none] (9) at (-0.75, -0.25) {};
		\node [style=none] (10) at (1.5, -0.25) {};
		\node [style=none] (11) at (0, -0.25) {};
		\node [style=none] (12) at (-1.5, -0.25) {};
		\node [style=none] (13) at (0.75, -0.25) {};
	\end{pgfonlayer}
	\begin{pgfonlayer}{edgelayer}
		\draw (1.center) to (3.center);
		\draw (3.center) to (2.center);
		\draw (2.center) to (1.center);
		\draw [style=directed] (12.center) to (4);
		\draw [style=directed] (9.center) to (5);
		\draw [style=directed] (6.center) to (11);
		\draw [style=directed] (7) to (13.center);
		\draw [style=directed] (8) to (10.center);
	\end{pgfonlayer}
\end{tikzpicture}}
\endpgfgraphicnamed
\quad \leadsto \quad
\beginpgfgraphicnamed{compl-fixed-arity-phi}
\begin{tikzpicture}
	\begin{pgfonlayer}{nodelayer}
		\node [style=arbi] (0) at (0, 0) {$\phi$};
		\node [style=wire, label={left:$a$}] (1) at (-0.5, 1.25) {};
		\node [style=wire, label={right:$b$}] (2) at (0.5, 1.25) {};
		\node [style=wire, label={right:$c$}] (3) at (1, -1) {};
		\node [style=wire, label={right:$d$}] (4) at (0, -1.5) {};
		\node [style=wire, label={left:$e$}] (5) at (-1, -1) {};
	\end{pgfonlayer}
	\begin{pgfonlayer}{edgelayer}
		\draw [style=directed] (0) to (1);
		\draw [style=directed] (0) to (2);
		\draw [style=directed] (3) to (0);
		\draw [style=directed] (4) to (0);
		\draw [style=directed] (5) to (0);
	\end{pgfonlayer}
\end{tikzpicture}}
\endpgfgraphicnamed
\qquad\qquad
\beginpgfgraphicnamed{psi-point}
\begin{tikzpicture}
	\begin{pgfonlayer}{nodelayer}
		\node [style=none] (0) at (0, -0.75) {$\psi$};
		\node [style=none] (1) at (-1.25, -0.25) {};
		\node [style=none] (2) at (1.25, -0.25) {};
		\node [style=none] (3) at (0, -1.5) {};
		\node [style=wire, label={above:$f$}] (4) at (-0.75, 0.5) {};
		\node [style=wire, label={above:$g$}] (5) at (0, 0.5) {};
		\node [style=wire, label={above:$h$}] (6) at (0.75, 0.5) {};
		\node [style=none] (7) at (-0.75, -0.25) {};
		\node [style=none] (8) at (0, -0.25) {};
		\node [style=none] (9) at (0.75, -0.25) {};
	\end{pgfonlayer}
	\begin{pgfonlayer}{edgelayer}
		\draw (1.center) to (3.center);
		\draw (3.center) to (2.center);
		\draw (2.center) to (1.center);
		\draw [style=directed] (7.center) to (4);
		\draw [style=directed] (5) to (8);
		\draw [style=directed] (6) to (9.center);
	\end{pgfonlayer}
\end{tikzpicture}}
\endpgfgraphicnamed
\quad \leadsto \quad
\beginpgfgraphicnamed{compl-fixed-arity-psi}
\begin{tikzpicture}
	\begin{pgfonlayer}{nodelayer}
		\node [style=arbi] (0) at (0, 0) {$\psi$};
		\node [style=wire, label={left:$f$}] (1) at (0, 1.25) {};
		\node [style=wire, label={right:$g$}] (2) at (0.75, -1) {};
		\node [style=wire, label={left:$h$}] (3) at (-0.75, -1) {};
	\end{pgfonlayer}
	\begin{pgfonlayer}{edgelayer}
		\draw [style=directed] (0) to (1);
		\draw [style=directed] (2) to (0);
		\draw [style=directed] (3) to (0);
	\end{pgfonlayer}
\end{tikzpicture}}
\endpgfgraphicnamed
\hfill
\end{equation}

Now, wires connecting these dots indicate the presence of caps:
\begin{equation}\label{eq:contraction}
\hfill
\beginpgfgraphicnamed{psi-phi-contract}
\begin{tikzpicture}
	\begin{pgfonlayer}{nodelayer}
		\node [style=none] (0) at (-1, -0.75) {$\psi$};
		\node [style=none] (1) at (-2.25, -0.25) {};
		\node [style=none] (2) at (0.25, -0.25) {};
		\node [style=none] (3) at (-1, -1.5) {};
		\node [style=wire, label={above:$f$}] (4) at (-1.75, 0.5) {};
		\node [style=none] (5) at (-1, 0) {};
		\node [style=none] (6) at (-0.25, 0) {};
		\node [style=wire, label={above:$d$}] (7) at (3.75, 0.5) {};
		\node [style=wire, label={above:$e$}] (8) at (4.5, 0.5) {};
		\node [style=none] (9) at (-1, -0.25) {};
		\node [style=none] (10) at (4.5, -0.25) {};
		\node [style=none] (11) at (-0.25, -0.25) {};
		\node [style=none] (12) at (-1.75, -0.25) {};
		\node [style=none] (13) at (3.75, -0.25) {};
		\node [style=none] (14) at (1, -0.25) {};
		\node [style=none] (15) at (3, -1.5) {};
		\node [style=none] (16) at (3, -0.75) {$\phi$};
		\node [style=none] (17) at (5, -0.25) {};
		\node [style=wire, label={above:$c$}] (18) at (3, 0.5) {};
		\node [style=none] (19) at (3, -0.25) {};
		\node [style=none] (20) at (1.5, 0) {};
		\node [style=none] (21) at (1.5, -0.25) {};
		\node [style=none] (22) at (2.25, 0) {};
		\node [style=none] (23) at (2.25, -0.25) {};
	\end{pgfonlayer}
	\begin{pgfonlayer}{edgelayer}
		\draw (1.center) to (3.center);
		\draw (3.center) to (2.center);
		\draw (2.center) to (1.center);
		\draw [style=directed] (12.center) to (4);
		\draw [style=directed, red] (5.center) to (9.center);
		\draw [style=directed, red] (6.center) to (11);
		\draw [style=directed] (7) to (13.center);
		\draw [style=directed] (8) to (10.center);
		\draw (14.center) to (15.center);
		\draw (15.center) to (17.center);
		\draw (17.center) to (14.center);
		\draw [style=directed] (18) to (19.center);
		\draw [style=undirected, red] (21.center) to (20.center);
		\draw [style=undirected, red] (23.center) to (22.center);
		\draw [red, style=undirected, in=90, out=90, looseness=1.00] (5.center) to (20.center);
		\draw [red, style=undirected, in=90, out=90, looseness=1.00] (22.center) to (6.center);
	\end{pgfonlayer}
\end{tikzpicture}}
\endpgfgraphicnamed
\quad \leadsto \quad
\beginpgfgraphicnamed{compose-graphs}
\begin{tikzpicture}
	\begin{pgfonlayer}{nodelayer}
		\node [style=arbi] (0) at (0, -0.75) {$\phi$};
		\node [style=wire, label={right:$c$}] (1) at (0.75, -1.5) {};
		\node [style=wire, label={[yshift=5pt]right:$d$}] (2) at (0, -2) {};
		\node [style=wire, label={left:$e$}] (3) at (-0.75, -1.5) {};
		\node [style=arbi] (4) at (0, 1) {$\psi$};
		\node [style=wire, label={[yshift=-5pt]right:$f$}] (5) at (0, 2) {};
	\end{pgfonlayer}
	\begin{pgfonlayer}{edgelayer}
		\draw [style=directed] (1) to (0);
		\draw [style=directed] (2) to (0);
		\draw [style=directed] (3) to (0);
		\draw [style=directed, red, in=-135, out=45, looseness=1.00] (0) to node[left, pos=0.7]{} (4);
		\draw [style=directed, red, in=-45, out=135, looseness=1.00] (0) to node[right, pos=0.7]{} (4);
		\draw [style=directed] (4) to (5);
	\end{pgfonlayer}
\end{tikzpicture}}
\endpgfgraphicnamed
\hfill
\end{equation}

To succinctly express these kinds of string diagrams syntactically, we can use \textit{tensor notation}. Here, we represent generators by writing their names, followed by a list of subscripts indicating their (named) inputs and outputs:
\[
\hfill
\phi_{\+a\+b\<c\<d\<e} \ :=\ %
\beginpgfgraphicnamed{compl-fixed-arity-phi}
\begin{tikzpicture}
	\begin{pgfonlayer}{nodelayer}
		\node [style=arbi] (0) at (0, 0) {$\phi$};
		\node [style=wire, label={left:$a$}] (1) at (-0.5, 1.25) {};
		\node [style=wire, label={right:$b$}] (2) at (0.5, 1.25) {};
		\node [style=wire, label={right:$c$}] (3) at (1, -1) {};
		\node [style=wire, label={right:$d$}] (4) at (0, -1.5) {};
		\node [style=wire, label={left:$e$}] (5) at (-1, -1) {};
	\end{pgfonlayer}
	\begin{pgfonlayer}{edgelayer}
		\draw [style=directed] (0) to (1);
		\draw [style=directed] (0) to (2);
		\draw [style=directed] (3) to (0);
		\draw [style=directed] (4) to (0);
		\draw [style=directed] (5) to (0);
	\end{pgfonlayer}
\end{tikzpicture}}
\endpgfgraphicnamed
\qquad\qquad
\psi_{\+f\<g\<h} \ :=\ %
\beginpgfgraphicnamed{compl-fixed-arity-psi}
\begin{tikzpicture}
	\begin{pgfonlayer}{nodelayer}
		\node [style=arbi] (0) at (0, 0) {$\psi$};
		\node [style=wire, label={left:$f$}] (1) at (0, 1.25) {};
		\node [style=wire, label={right:$g$}] (2) at (0.75, -1) {};
		\node [style=wire, label={left:$h$}] (3) at (-0.75, -1) {};
	\end{pgfonlayer}
	\begin{pgfonlayer}{edgelayer}
		\draw [style=directed] (0) to (1);
		\draw [style=directed] (2) to (0);
		\draw [style=directed] (3) to (0);
	\end{pgfonlayer}
\end{tikzpicture}}
\endpgfgraphicnamed
\hfill\qquad
\]
Inputs (i.e.~outputs of type $X^*$) are represented as names with `checks' $\<a, \<b, \ldots$, whereas outputs are represented as names with `hats' $\+a, \+b, \ldots$. We combine generators into a single diagram by concatenating them, and the process of connecting generators together by caps---which we call \textit{contraction}---is indicated by repeating names:
\begin{equation}\label{eq:contraction}
\hfill
\psi_{\+f\bR\<a\<b\e}\phi_{\bR\+a\+b\e\<c\<d\<e} \ :=\ 
\beginpgfgraphicnamed{compose-graphs}
\begin{tikzpicture}
	\begin{pgfonlayer}{nodelayer}
		\node [style=arbi] (0) at (0, -0.75) {$\phi$};
		\node [style=wire, label={right:$c$}] (1) at (0.75, -1.5) {};
		\node [style=wire, label={[yshift=5pt]right:$d$}] (2) at (0, -2) {};
		\node [style=wire, label={left:$e$}] (3) at (-0.75, -1.5) {};
		\node [style=arbi] (4) at (0, 1) {$\psi$};
		\node [style=wire, label={[yshift=-5pt]right:$f$}] (5) at (0, 2) {};
	\end{pgfonlayer}
	\begin{pgfonlayer}{edgelayer}
		\draw [style=directed] (1) to (0);
		\draw [style=directed] (2) to (0);
		\draw [style=directed] (3) to (0);
		\draw [style=directed, red, in=-135, out=45, looseness=1.00] (0) to node[left, pos=0.7]{} (4);
		\draw [style=directed, red, in=-45, out=135, looseness=1.00] (0) to node[right, pos=0.7]{} (4);
		\draw [style=directed] (4) to (5);
	\end{pgfonlayer}
\end{tikzpicture}}
\endpgfgraphicnamed
\hfill
\end{equation}
If a name occurs once, it is called a \textit{free edgename}. If it is repeated, it is called a \textit{bound edgename}. As the name would suggest, bound edgenames have no meaning in their own right, and can be changed (a.k.a. $\alpha$-converted) at will. Hence the expressions $\psi_{\+f\bR\<a\<b\e}\phi_{\bR\+a\+b\e\<c\<d\<e}$ and $\phi_{\bR\+g\+h\e\<c\<d\<e}\psi_{\+f\bR\<g\<h\e}$ both represent~\eqref{eq:contraction}. Also, since it is the names that indicate inputs/outputs of a tensor expression, the order in which we write tensor symbols is irrelevant. So, for example, $\psi_{\+f\<a\<b} \phi_{\+a\+b\<c\<d\<e} = \phi_{\+a\+b\<c\<d\<e} \psi_{\+f\<a\<b}$.

This notation gives a simple presentation of string diagrams, and hence of morphisms in the free compact closed category over $\Sigma$. The only mismatch between tensors and morphisms in the free category is that tensors use \textit{names} to identify inputs/outputs, whereas categories use \textit{positions}. Thus, to relate the two concepts, we assume the set of edgenames contains two disjoint sets $\{ a_1, a_2, \ldots \}$ and $\{ b_1, b_2, \ldots \}$ that are totally ordered and (countably) infinite, and introduce the notion of \textit{canonically named tensors}.

\begin{definition}
  A tensor is \textit{canonically named} if its free names are $a_1, \ldots a_m, b_1, \ldots, b_n$ for some $m, n \geq 0$.
\end{definition}

We can then express a morphism in $\textrm{Free}(\Sigma)$ as a tensor whose $i$-th input is named $a_i$ and whose $j$-th output is named $b_j$. It was shown in~\cite{KissingerATS} (for the traced case) and~\cite{NoncommBB-long} (for the compact closed case) that $\textrm{Free}(\Sigma)$ is equivalent to the category whose morphisms are canonically-named tensors, with $\circ$ and $\otimes$ defined in the obvious way using renaming and contraction. This gives us an important consequence:

\begin{theorem}
  For any compact closed signature $\Sigma$, a valuation $\llbracket - \rrbracket : \Sigma \to \mathcal C$ lifts uniquely to an operation which sends canonically named tensors $G$ over $\Sigma$ to morphisms $\llbracket G \rrbracket$ in $\mathcal C$.
\end{theorem}

\subsection{!-tensors}\label{sec:bang-tensors}

As mentioned in the intro, a string diagram with !-boxes represents a family of string diagrams, where the sub-diagram in the !-box has been copied an arbitrary number of times. To formalise this, we extend the tensor syntax to include !-boxes. These extended expressions are called !-tensors. Fix disjoint, infinite sets $\edgenames$ and $\boxnames$ of \textit{edgenames} and \textit{boxnames}, respectively.

\begin{definition}\label{def:edgeterm-equiv}
The set of \textit{edgeterms} $\edgeterms$ is defined inductively as follows:
  \begin{align*}
    \bullet\; &\epsilon \in \edgeterms && \text{(empty edgeterm)} \\
    \bullet\; & \ein a, \eout a \in \edgeterms && a \in \edgenames \\
    \bullet\; &\lexp{e}^A,\rexp{e}^A \in \edgeterms && e\in\edgeterms,\; A\in\boxnames \\
    \bullet\; & e f \in \edgeterms && e,f\in\edgeterms
  \end{align*}
\end{definition}


Letting $1$ represent the empty !-tensor and $1_{\+a\<b}$ represent an identity edge with input named $b$ and output named $a$, we can define !-tensor expressions as follows:

\begin{definition}\label{def:!-tensors}
The set of all !-tensor expressions $\sgraphterms$ for a signature $\Sigma$ is defined inductively as:
  \begin{align*}
    \bullet\; &1, 1_{\+a\<b} \in \sgraphterms && a,b\in\edgenames\\
    \bullet\; &\phi_{e} \in \sgraphterms && e\in\edgeterms, \phi\in\Sigma \\
    \bullet\; &[G]^A \in \sgraphterms && G\in\sgraphterms, \; A\in\boxnames \\
    \bullet\; & G H \in \sgraphterms && G,H\in\sgraphterms
  \end{align*}
  Subject to the conditions that (F1) $\<a$ and $\+a$ must occur at most once for each edgename $a$ and (F2) $[\ldots]^A$ must occur at most once for each boxname $A$, as well as some consistency conditons for !-boxes.
\end{definition}

The remaining consistency conditions are easiest to understand in the graphical presentation of !-tensors. Sub-expressions of the form $[\ldots]^A$ are represented by wrapping a box around part of the string diagram:
\[
\hfill
\phi_{\+a}[\psi_{\<b}]^B \ :=\ %
\beginpgfgraphicnamed{bb-example-noconn}
\begin{tikzpicture}
	\begin{pgfonlayer}{nodelayer}
		\node [style=arbi] (0) at (1.5, 0.5) {$\psi$};
		\node [style=arbi] (1) at (-0.5, -0.5) {$\phi$};
		\node [style=bbox, label={$B$}] (2) at (0.75, 1.25) {};
		\node [style=none] (3) at (2.25, 1.25) {};
		\node [style=none] (4) at (2.25, -1) {};
		\node [style=none] (5) at (0.75, -1) {};
		\node [style=wire, label={{[yshift=-4pt]right:$a$}}] (6) at (-0.5, 0.75) {};
		\node [style=wire, label={{[yshift=4pt]right:$b$}}] (7) at (1.5, -0.75) {};
	\end{pgfonlayer}
	\begin{pgfonlayer}{edgelayer}
		\draw [style=boxedge] (2) to (5.center);
		\draw [style=boxedge] (5.center) to (4.center);
		\draw [style=boxedge] (4.center) to (3.center);
		\draw [style=boxedge] (3.center) to (2);
		\draw [style=directed] (1) to (6);
		\draw [style=directed] (7) to (0);
	\end{pgfonlayer}
\end{tikzpicture}}
\endpgfgraphicnamed
\hfill\qquad
\]
Edges connecting into or out of a !-box must be annotated with the !-box name and a direction, indicating whether the new edgenames should be produced to the left (anticlockwise) or to the right (clockwise) when a !-box is expanded. We indicate this direction by drawing an arc over the annotated edges:
\[
\hfill
\phi_{\lexp{\+a}^B}[\psi_{\<a}]^B \ :=\  %
\beginpgfgraphicnamed{bb-example}
\begin{tikzpicture}
	\begin{pgfonlayer}{nodelayer}
		\node [style=arbi] (0) at (0, 1) {$\psi$};
		\node [style=arbi] (1) at (0, -1) {$\phi$};
		\node [style=bbox, label=$B$] (2) at (-0.75, 1.75) {};
		\node [style=none] (3) at (0.75, 1.75) {};
		\node [style=none] (4) at (0.75, 0.25) {};
		\node [style=none] (5) at (-0.75, 0.25) {};
	\end{pgfonlayer}
	\begin{pgfonlayer}{edgelayer}
		\draw [style=directed, arcout={{B}{0}{-60}{2mm}{1}}] (1) to (0);
		\draw [style=boxedge] (2) to (5.center);
		\draw [style=boxedge] (5.center) to (4.center);
		\draw [style=boxedge] (4.center) to (3.center);
		\draw [style=boxedge] (3.center) to (2);
	\end{pgfonlayer}
\end{tikzpicture}}
\endpgfgraphicnamed \qquad\textrm{vs.}\qquad
\phi_{\rexp{\+a}^B}[\psi_{\<a}]^B \ :=\  %
\beginpgfgraphicnamed{bb-example-cw}
\begin{tikzpicture}
	\begin{pgfonlayer}{nodelayer}
		\node [style=arbi] (0) at (0, 1) {$\psi$};
		\node [style=arbi] (1) at (0, -1) {$\phi$};
		\node [style=bbox, label=$B$] (2) at (-0.75, 1.75) {};
		\node [style=none] (3) at (0.75, 1.75) {};
		\node [style=none] (4) at (0.75, 0.25) {};
		\node [style=none] (5) at (-0.75, 0.25) {};
	\end{pgfonlayer}
	\begin{pgfonlayer}{edgelayer}
		\draw [style=directed, arcout={{B}{0}{60}{2mm}{1}}] (1) to (0);
		\draw [style=boxedge] (2) to (5.center);
		\draw [style=boxedge] (5.center) to (4.center);
		\draw [style=boxedge] (4.center) to (3.center);
		\draw [style=boxedge] (3.center) to (2);
	\end{pgfonlayer}
\end{tikzpicture}}
\endpgfgraphicnamed
\hfill\qquad
\]
We drop the label on the arc when it can be inferred from context. The remaining consistency conditions say that any edge connecting into or out of a !-box must have an annotation, and those annotations should respect nesting of !-boxes, as in e.g.:
\[
\hfill
\phi_{\+a \lexp{ \lexp{\<b}^B }^A} [[\phi_{\+b\<c}]^B]^A \ :=\
\beginpgfgraphicnamed{nested-ex}
\begin{tikzpicture}
	\begin{pgfonlayer}{nodelayer}
		\node [style=arbi] (0) at (0, 2) {$\phi$};
		\node [style=arbi] (1) at (0, -0.5) {$\phi$};
		\node [style=bbox, label={$B$}] (2) at (-0.75, 0.25) {};
		\node [style=bbox, label={$A$}] (3) at (-1.25, 0.75) {};
		\node [style=none] (4) at (0.75, 0.25) {};
		\node [style=none] (5) at (0.75, -1.75) {};
		\node [style=none] (6) at (-0.75, -1.75) {};
		\node [style=none] (7) at (-1.25, -2) {};
		\node [style=none] (8) at (1, -2) {};
		\node [style=none] (9) at (1, 0.75) {};
		\node [style=wire, label={[yshift=-4pt]right:$a$}] (10) at (0, 3) {};
		\node [style=wire, label={[yshift=4pt]right:$c$}] (11) at (0, -1.5) {};
	\end{pgfonlayer}
	\begin{pgfonlayer}{edgelayer}
		\draw [style=boxedge] (3) to (2);
		\draw [style=boxedge] (2) to (4.center);
		\draw [style=boxedge] (4.center) to (5.center);
		\draw [style=boxedge] (5.center) to (6.center);
		\draw [style=boxedge] (6.center) to (2);
		\draw [style=boxedge] (3) to (9.center);
		\draw [style=boxedge] (9.center) to (8.center);
		\draw [style=boxedge] (8.center) to (7.center);
		\draw [style=boxedge] (7.center) to (3);
		\draw [style=directed, arcin={{$B$}{0}{-65}{4mm}{1}}] (1) to (0);
		\draw [draw=none, arcin={{$A$}{0}{-65}{2mm}{1}}] (1) to (0);
		\draw [style=directed] (0) to (10);
		\draw [style=directed] (11) to (1);
	\end{pgfonlayer}
\end{tikzpicture}}
\endpgfgraphicnamed
\hfill\qquad
\]
For a fully rigorous account of these conditions, see~\cite{NoncommBB-long}. However, the above description should suffice for the purposes of this paper, so we'll proceed to how !-tensors are instantiated. The primary instantiation operations are \textit{expand}, which produces a new copy of the contents of a !-box and \textit{kill}, which removes the !-box from the diagram:
\begin{center}
\beginpgfgraphicnamed{bb-ex1-kill}
\begin{tikzpicture}
	\begin{pgfonlayer}{nodelayer}
		\node [style=arbi] (0) at (-0.5, 2.25) {$\xi$};
		\node [style=arbi] (1) at (0, -2.25) {$\zeta$};
		\node [style=wire, label={right:$e$}] (2) at (0, -3.5) {};
	\end{pgfonlayer}
	\begin{pgfonlayer}{edgelayer}
		\draw [style=directed] (2) to (1);
	\end{pgfonlayer}
\end{tikzpicture}}
\endpgfgraphicnamed
  \quad$\leftarrow \Kill_B\!-$
\beginpgfgraphicnamed{bb-ex1}
\begin{tikzpicture}
	\begin{pgfonlayer}{nodelayer}
		\node [style=arbi] (0) at (-0.5, 0.25) {$\phi$};
		\node [style=arbi] (1) at (0.75, -0.25) {$\psi$};
		\node [style=arbi] (2) at (-0.5, 2.25) {$\xi$};
		\node [style=arbi] (3) at (0, -2.25) {$\zeta$};
		\node [style=wire, label={right:$e$}] (4) at (0, -3.5) {};
		\node [style=bbox, label={$B$\ \ }] (5) at (-1.5, 1.25) {};
		\node [style=none] (6) at (1.75, -1) {};
		\node [style=none] (7) at (1.75, 1.25) {};
		\node [style=none] (8) at (-1.5, -1) {};
	\end{pgfonlayer}
	\begin{pgfonlayer}{edgelayer}
		\draw [style=directed, arcin={{}{0}{60}{1.5mm}{1}}] (0) to (2);
		\draw [style=directed, arcin={{}{15}{-90}{1.5mm}{1}}] (0) to (3);
		\draw [style=directed] (3) to (1);
		\draw [style=directed] (1) to (0);
		\draw [style=directed] (4) to (3);
		\draw [style=boxedge] (5) to (7.center);
		\draw [style=boxedge] (7.center) to (6.center);
		\draw [style=boxedge] (6.center) to (8.center);
		\draw [style=boxedge] (8.center) to (5);
	\end{pgfonlayer}
\end{tikzpicture}}
\endpgfgraphicnamed
  $-\, \Exp_B\!\rightarrow$
\beginpgfgraphicnamed{bb-ex1-exp}
\begin{tikzpicture}
	\begin{pgfonlayer}{nodelayer}
		\node [style=arbi] (0) at (0.5, 0.25) {$\phi$};
		\node [style=arbi] (1) at (-0.75, 2.25) {$\xi$};
		\node [style=arbi] (2) at (1.75, -0.25) {$\psi$};
		\node [style=arbi] (3) at (-0.75, -2.25) {$\zeta$};
		\node [style=arbi] (4) at (-2.5, 0.25) {$\phi$};
		\node [style=arbi] (5) at (-1.25, -0.25) {$\psi$};
		\node [style=none] (6) at (2.5, -1) {};
		\node [style=bbox, label={$B$\ \ }] (7) at (-0.25, 1.25) {};
		\node [style=wire, label={right:$e$}] (8) at (-0.75, -3.5) {};
		\node [style=none] (9) at (2.5, 1.25) {};
		\node [style=none] (10) at (-0.25, -1) {};
	\end{pgfonlayer}
	\begin{pgfonlayer}{edgelayer}
		\draw [style=directed, arcin={{}{0}{45}{2mm}{1}}] (0) to (1);
		\draw [style=directed, arcin={{}{15}{-60}{3.5mm}{1}}] (0) to (3);
		\draw [style=directed] (3) to (2);
		\draw [style=directed] (2) to (0);
		\draw [style=directed] (5) to (4);
		\draw [style=directed] (4) to (1);
		\draw [style=directed] (4) to (3);
		\draw [style=directed] (3) to (5);
		\draw [style=directed] (8) to (3);
		\draw [style=boxedge] (7) to (9.center);
		\draw [style=boxedge] (9.center) to (6.center);
		\draw [style=boxedge] (6.center) to (10.center);
		\draw [style=boxedge] (10.center) to (7);
	\end{pgfonlayer}
\end{tikzpicture}}
\endpgfgraphicnamed
\end{center}
These two operations suffice to produce all \textit{concrete instances}, that is all instances not involving any !-boxes, of a !-tensor. If we wish to get \textit{all} instances of a !-tensor, including those with !-boxes, we factorise expand into two additional operations: \textit{copy}, which makes a copy of the !-box and its contents, and \textit{drop}, which removes a !-box and leaves its contents behind. We can define all four of these operations recursively on !-tensor expressions. We first give the recursive cases where all four operations behave the same:
\begin{align*}
  \Op_B(G H) & := \Op_B(G) \Op_B(H) &
  \Op_B(e f) &:= \Op_B(e) \Op_B(f) \\
  \Op_B([G]^A) &:= [\Op_B(G)]^A &
  \Op_B(\rexp{e}^A) &:= \rexp{\Op_B(e)}^A \\
  \Op_B(\phi_e) &:= \phi_{\Op_B(e)} &
  \Op_B(\lexp{e}^A) &:= \lexp{\Op_B(e)}^A \\
  \Op_B(x) & : = x & &
\end{align*}
where $A \neq B$ and $x \in \{ 1, 1_{\+a\<b}, \ein a, \eout a, \epsilon \}$. The four operations are distinguished on the remaining three cases:
\begin{align*}
  \Exp_B([G]^B)  &:= [G]^B\fr(G) &
  \Kill_B([G]^B) &:= 1 \\
  \Exp_B(\rexp{e}^B)  &:= \rexp{e}^B\fr(e) &
  \Kill_B(\rexp{e}^B) &:= \epsilon \\
  \Exp_B(\lexp{e}^B)  &:= \fr(e)\lexp{e}^B &
  \Kill_B(\lexp{e}^B) &:= \epsilon \\
  \vphantom{\Exp_B} & & & \\
  \Copy_B([G]^B)  &:= [G]^B [\fr(G)]^{\fr(B)} &
  \Drop_B([G]^B)  &:= G \\
  \Copy_B(\rexp{e}^B)  &:= \rexp{e}^B \rexp{\fr(e)}^{\fr(B)} &
  \Drop_B(\rexp{e}^B)  &:= e \\
  \Copy_B(\lexp{e}^B)  &:= \lexp{\fr(e)}^{\fr(B)} \lexp{e}^B &
  \Drop_B(\lexp{e}^B)  &:= e
\end{align*}
Where $\fr$ is a function assigning fresh names to all edges and !-boxes in an expression. We occasionally write $\Exp_{B,\fr}$ and $\Copy_{B,\fr}$ to explicitly reference the freshness function of a !-box operation.

\section{Compatibility and instantiations of !-boxes}\label{sec:compat}

In Section~\ref{sec:!-logic_formulas}, we will define the formulas of !-logic. It only makes sense to combine !-tensors into single formulas if their !-boxes are compatible in some sense, so we first provide some basic notions relating to compatibility.

\begin{definition}
  If $F$ is a set and $\childof$ is a binary relation on $F$ then the pair $(F,\childof)$ is called a \textit{forest} if it forms a cycle-free directed graph where each node $A$ has at most one node $B$ s.t $A\childof B$. A forest can also be seen as a graph made up of disconnected directed trees. We write $<$ for the transitive closure and $\descof$ for the reflexive and transitive closure of $\childof$.
\end{definition}

Let $\downset{X}$ and $\upset{X}$ be the downward and upward closure of $X \subseteq F$, respectively. For a single element $A \in F$, we write $A^\leq$ for $\{ A \}^\leq$ and $A^{<}$ for $A^{\leq} \backslash A$.

\begin{definition}
  If a subset $X \subseteq F$ is both upward and downward closed (i.e. $X=\downset{X}=\upset{X}$) then we say $X$ is a \textit{component} of $(F,\childof)$. If it contains no proper sub-components, it is called a \textit{connected component}.
\end{definition}

We write $\topboxes{F} \subseteq F$ for the set of maximal elements with respect to $\leq$. Note that for $A\in\topboxes{F}$ the set $\downset{A}$ is always a connected component, and for $F$ finite, all connected components are of this form.

\begin{definition}
  Two forests $F, F'$ are said to be \textit{compatible}, written $F \compat F'$, if the intersection $F \cap F'$ is a (possibly empty) component of both $F$ and $F'$.
\end{definition}

Equivalently, $F, F'$ are compatible if and only if there exist forests $X, Y, Z$ such that $F = X \uplus Y$ and $G = Y \uplus Z$. As a consequence, the union of compatible forests is always well-defined ($F \cup F' := X \uplus Y \uplus Z$), and itself a forest. For any !-tensor, we can always associate a forest of !-boxes:

\begin{definition}
  For a !-tensor $G$, let $(\Boxes(G), \childin{G})$ be the forest of !-boxes in $G$, where $A \childin{G} B$ iff $A$ is a direct descendent of $B$. That is, $A$ is nested inside of $B$ with no intervening !-boxes.
\end{definition}

An important concept for !-tensors is that of \textit{instantiations}. These capture precisely the sequence of operations by which a !-tensor is transformed into some instance of itself. For a !-tensor $G$, an \textit{instantiation} $i$ of $G$ is a sequence of zero or more $\Exp$ and $\Kill$ operations such that $i(G)$ doesn't contain any !-boxes.

In fact, we can divorce the notion of instantiation from a particular !-tensor if we notice that instantiations make sense for any forest. For a forest $F$, and an element $B \in F$ define the $\Exp_B$ and $\Kill_B$ operations as follows:
\[
\hfill
\Exp_B(F) := F \cup \textbf{fr}(B^<) \qquad\qquad \Kill_B(F) := F \backslash B^{\leq}
\hfill\qquad
\]
where the top elements of $\textbf{fr}(B^<)$ are added as descendants of the parent of $B$ (if it has one). So, $\Kill_B$ removes $B$ and all of its children, whereas $\Exp_B$ behaves just like expanding a !-box, in that it adds a fresh copy of all of the children as siblings:
\[
\hfill
\Exp_B\left(\raisebox{7mm}{\begin{forest}
[$A$
  [$B$
    [$C$]
    [$D$]
  ]
  [$E$]
]
\end{forest}}\right) \ =\ 
\raisebox{7mm}{\begin{forest}
[$A$
  [$B$
    [$C$]
    [$D$]
  ]
  [$C'$]
  [$D'$]
  [$E$]
]
\end{forest}}
\qquad\qquad
\Kill_B\left(\raisebox{7mm}{\begin{forest}
[$A$
  [$B$
    [$C$]
    [$D$]
  ]
  [$E$]
]
\end{forest}}\right) \ =\ 
\raisebox{7mm}{\begin{forest}
[$A$
  [$E$]
]
\end{forest}}
\hfill\qquad
\]

We can now define instantiations in a way that only refers to forests:

\begin{definition}\label{def:instantiation}
  For a forest $F$, an \textit{instantiation} of $F$ is a composition $i$ of zero or more operations $\Exp_B$, $\Kill_B$ such that $B$ is in the domain of each operation and $i(F) = \{\}$. Let $\Inst(F)$ be the set of all instantiations of $F$.
\end{definition}

In particular, if $F$ is empty, $\Inst(F)$ only contains the trivial instantiation $1$. The set of instantiations for a !-tensor $G$ is then just $\Inst(\Boxes(G))$. On the other hand, $i(G)$ gives us a well-defined !-tensor for \textit{any} instantiation $i \in \Inst(F)$ when $F \compat \Boxes(G)$. This added flexibility will be important to the interpretation of !-logic formulas, where instantiations may act on many !-tensors simultaneously.

\section{!-logic formulas}\label{sec:!-logic_formulas}

In this section, we will introduce the syntax of !-logic. The atomic !-logic formulas are well-formed equations between !-tensors and generic formulas are built up from the atomic formulas using conjunction, implication, and universal quantification. 

Well-formed !-tensor equations are pairs of !-tensors with the property that any simultaneous instantiation of the LHS and RHS produces a valid equation between tensors. That is, the LHS and the RHS of any instance of the equation should have identical free edgenames for their inputs and outputs.

\begin{definition}\label{def:wfeq}
  A !-tensor equation $G = H$ is \textit{well-formed} if $G$ and $H$ have identical inputs and outputs, $\Boxes(G) \compat \Boxes(H)$, and an input $\<a$ (resp. output $\+a$) occurs in a !-box $A$ in $G$ iff it occurs in the same !-box in $H$.
\end{definition}

Note that by `$\<a$ occurs in $A$' we mean $\<a$ occurs as a sub-expression of $[\ldots]^A$, $\lexp{\ldots}^A$ or $\rexp{\ldots}^A$. The other formulas are built inductively, while maintaining the property that the sub-formulas have compatible !-boxes. To accomplish this, it is most convenient to define the set of !-formulas while simultaneously defining the operation $\Boxes(X)$ for any !-formula $X$.

\begin{definition}\label{def:bang-formulas}
  The set of \textit{!-formulas}, $\formulas{\Sigma}$, for a signature $\Sigma$ is defined inductively as:
  \begin{align*}
    \bullet\;& G=H \in \sformulas && G, H \in \graphterms_{\Sigma},\  G = H \text{ well-formed} \\
    \bullet\;& X\wedge Y \in \sformulas && X, Y \in \sformulas,\ \Boxes(X) \compat \Boxes(Y) \\
    \bullet\;& X\rightarrow Y \in \sformulas && X, Y \in \sformulas,\ \Boxes(X) \compat \Boxes(Y) \\
    \bullet\;& \quant{A} X \in \sformulas && X \in \sformulas,\  A\in\topboxes{\Boxes(X)}
  \end{align*}
    where $\Boxes(-)$ is defined recursively on !-formulas by:
  \begin{align*}
    \bullet\;& \Boxes(G=H) := \Boxes(G) \cup \Boxes(H) \\
    \bullet\;& \Boxes(X\wedge Y) := \Boxes(X)\cup\Boxes(Y) \\
    \bullet\;& \Boxes(X\rightarrow Y) := \Boxes(X)\cup\Boxes(Y) \\
    \bullet\;& \Boxes(\quant{A} X) := \Boxes(X)\backslash\downset{A}
  \end{align*}
\end{definition}

Just like one can read formulas in predicate logic as mappings from values of the free variables to truth values, one should read !-formulas as mappings from \textit{instantiations of !-boxes} to truth values. Thus, universal quantification over !-boxes states that a particular formula holds for \textit{all instantiations} involving those !-boxes. We will make this interpretation precise in Section~\ref{sec:semantics}.

One important thing to note is that universal quantification over a top-level !-box $A$ should be interpreted as quantifying over the entire \textit{connected component} $\downset{A}$. In the absence of nesting, this is the same as quantifying over individual !-boxes. However, in the presence of nesting, this restriction to only quantifying over entire components seems to be necessary for giving a consistent interpretation to !-logic formulas. This boils down to the fact that !-box operations on separate components of $\Boxes(X)$ commute, whereas arbitrary !-box operations do not.


\begin{remark}
  Note that the set $\sformulas$ in Definition~\ref{def:bang-formulas} is defined inductively by relying on a simultaneous recursive definition of $\Boxes$. This is non-circular, since the inductive steps always rely on calls to $\Boxes$ on strictly smaller formulas. Unsurprisingly, this style of definition is called \textit{induction-recursion}~\cite{IndRec}.
\end{remark}

In order to talk about instances of !-formulas, we must extend !-box operations from !-tensors to arbitrary formulas.

\begin{definition}
	For $\Op_B$ one of the operations $\Kill_B,\Exp_{B,\fr},\Copy_{B,\fr},\Drop_B$:
	\begin{itemize}
		\item $\Op_B(G=H):=\Op_B(G)=\Op_B(H)$
		\item $\Op_B(X\wedge Y):=\Op_B(X)\wedge\Op_B(Y)$
		\item $\Op_B(X\rightarrow Y):=\Op_B(X)\rightarrow\Op_B(Y)$
    \item $\Op_B(\quant{A}X):=\begin{cases}
      \quant{A}X & B\in\downset{A} \\
      \quant{A}\Op_B(X) & B\not\in\downset{A}
    \end{cases}$
  \end{itemize}
\end{definition}


\begin{theorem}
	!-box operations preserve the property of being a formula.
\end{theorem}
\begin{proof}
  We prove this using structural induction on !-formulas.
  \begin{itemize}
    \item If $G=H$ is a formula then $G$ and $H$ have the same free edges in the same !-boxes. Hence $\Op_B(G)$ and $\Op_B(H)$ have the same free edges ($a$ or $\fr(a)$ for $a$ free in $G=H$) and these are in the same !-boxes.
    \item For the next two cases we have $\Boxes(X)$ and $\Boxes(Y)$ compatible. $\Op_B$ takes the unique connected component, $S$, containing $B$ and replaces it with $\Op_B(S)$. This can only have gained fresh !-box names so $\Boxes(\Op_B(X))$ and $\Boxes(\Op_B(Y))$ are still compatible.
    \item If $B\in\downset{A}$ then the final case is trivial. If $B\not\in\downset{A}$ then the component $\downset{A}$ is not affected by $\Op_B$ so is still a component of $\Op_B(X)$.
  \end{itemize}
\end{proof}

\section{The rules of !L}\label{sec:rules}

We now define a simple logic over !-formulas, which we call !L. Our presentation is given in terms of sequents of the form: $\Gamma \vdash Y$, where $\Gamma := X_1, X_2, \ldots, X_n$ is a finite sequence of !-formulas. We will always assume in writing a sequent that all of the formulas involved have compatible !-boxes. We take the core logical rules to be those from positive intuitionistic logic with cut:
\[
\hfill
  \AxiomC{}
  \RightLabel{\scriptsize(Id)}
  \UnaryInfC{$X \vdash X$}
  \DisplayProof
\qquad
  \AxiomC{$\Gamma \vdash Y$}
  \RightLabel{\scriptsize(Weaken)}
  \UnaryInfC{$\Gamma,X \vdash Y$}
  \DisplayProof
\qquad
  \AxiomC{$\Gamma,X,Y,\Delta \vdash Z$}
  \RightLabel{\scriptsize(Perm)}
  \UnaryInfC{$\Gamma,Y,X,\Delta \vdash Z$}
  \DisplayProof
\qquad
  \AxiomC{$\Gamma,X,X \vdash Y$}
  \RightLabel{\scriptsize(Contr)}
  \UnaryInfC{$\Gamma,X \vdash Y$}
  \DisplayProof
\hfill\qquad
\]

\[
\hfill
  \AxiomC{$\Gamma \vdash X$}
  \AxiomC{$\Delta \vdash Y$}
  \RightLabel{\scriptsize($\wedge I$)}
  \BinaryInfC{$\Gamma,\Delta \vdash X\wedge Y$}
  \DisplayProof
\qquad
  \AxiomC{$\Gamma \vdash X\wedge Y$}
  \RightLabel{\scriptsize($\wedge E_1$)}
  \UnaryInfC{$\Gamma \vdash X$}
  \DisplayProof
\qquad
  \AxiomC{$\Gamma \vdash X\wedge Y$}
  \RightLabel{\scriptsize($\wedge E_2$)}
  \UnaryInfC{$\Gamma \vdash Y$}
  \DisplayProof
\hfill\qquad
\]

\[
\hfill
  \AxiomC{$\Gamma \vdash X\rightarrow Y$}
  \RightLabel{\scriptsize($\rightarrow E$)}
  \UnaryInfC{$\Gamma,X \vdash Y$}
  \DisplayProof
\qquad
  \AxiomC{$\Gamma,X \vdash Y$}
  \RightLabel{\scriptsize($\rightarrow I$)}
  \UnaryInfC{$\Gamma \vdash X\rightarrow Y$}
  \DisplayProof
\qquad
  \AxiomC{$\Gamma \vdash X$}
  \AxiomC{$\Delta,X \vdash Y$}
  \RightLabel{\scriptsize(Cut)}
  \BinaryInfC{$\Gamma,\Delta \vdash Y$}
  \DisplayProof
\hfill\qquad
\]
The rules for introducing and eliminating $\forall$ are also analogous to the usual rules. Let $\rn : \mathcal B \to \mathcal B$ be a bijective renaming function for !-boxes that is identity except on $\downset{A}$, and let $\rn(X)$ be the application of that renaming to a formula. Then:
\[
\hfill
  \AxiomC{$\Gamma \vdash \rn(X)$}
  \RightLabel{\scriptsize($\forall I$)}
  \UnaryInfC{$\Gamma \vdash \quant{A}X$}
  \DisplayProof
\qquad
  \AxiomC{$\Gamma \vdash \quant{A}X$}
  \RightLabel{\scriptsize($\forall E$)}
  \UnaryInfC{$\Gamma \vdash \rn(X)$}
  \DisplayProof
\hfill\qquad
\]
where in the case of $\forall I$ we also require that $\rn(\downset{A})$ is disjoint from $\Boxes(\Gamma)$.

To these core logical rules, we add rules capturing the fact that $=$ is an equivalence relation and a congruence:
\[
\hfill
  \AxiomC{}
  \RightLabel{\scriptsize(Refl)}
  \UnaryInfC{$\Gamma \vdash G=G$}
  \DisplayProof
  \qquad
  \AxiomC{$\Gamma \vdash G=H$}
  \RightLabel{\scriptsize(Symm)}
  \UnaryInfC{$\Gamma \vdash H=G$}
  \DisplayProof
  \qquad
  \AxiomC{$\Gamma \vdash G=H$}
  \AxiomC{$\Gamma \vdash H=K$}
  \RightLabel{\scriptsize(Trans)}
  \BinaryInfC{$\Gamma \vdash G=K$}
  \DisplayProof
\hfill\qquad
\]

\[
\hfill
  \AxiomC{$\Gamma \vdash G=H$}
  \RightLabel{\scriptsize(Box)}
  \UnaryInfC{$\Gamma \vdash [G]^A=[H]^A$}
  \DisplayProof
\qquad
  \AxiomC{$\Gamma \vdash G=H$}
  \RightLabel{\scriptsize(Prod)}
  \UnaryInfC{$\Gamma \vdash FG=FH$}
  \DisplayProof
\qquad
  \AxiomC{$\Gamma \vdash G=G'$}
  \RightLabel{\scriptsize(Ins)}
  \UnaryInfC{$\Gamma \vdash \Wk{A}{K}(G)=\Wk{A}{K}(G')$}
  \DisplayProof
\hfill\qquad
\]
where $\Wk{A}{K}$ inserts the expression $K$ into the !-box $A \in \Boxes(G)$. The last three rules allow an equation to be applied to a sub-expression. The first two rules allow us to build the context on to the outside of an equation, whereas the third one allows us to add some extra context within any !-box in an equation. These are precisely the equational reasoning rules introduced for !-tensors in~\cite{NoncommBB-long}. The only difference is we call the `weakening' operation from that paper `insertion' to avoid clash with the logical notion.


The main utility of universal quantification is to control the application !-box operations. In order to start instantiating a !-box (or one of its children), it must be under a universal quantifier:
\[
\hfill
  \AxiomC{$\Gamma \vdash \quant{A}X$}
  \RightLabel{\scriptsize($\Kill$)}
  \UnaryInfC{$\Gamma \vdash \Kill_B(X)$}
  \DisplayProof
\qquad\qquad
  \AxiomC{$\Gamma \vdash \quant{A}X$}
  \RightLabel{\scriptsize($\Exp$)}
  \UnaryInfC{$\Gamma \vdash \Exp_B(X)$}
  \DisplayProof
\hfill\qquad
\]
\[
\hfill
  \AxiomC{$\Gamma \vdash \quant{A}X$}
  \RightLabel{\scriptsize($\Drop$)}
  \UnaryInfC{$\Gamma \vdash \Drop_B(X)$}
  \DisplayProof
\qquad\qquad
  \AxiomC{$\Gamma \vdash \quant{A}X$}
  \RightLabel{\scriptsize($\Copy$)}
  \UnaryInfC{$\Gamma \vdash \Copy_B(X)$}
  \DisplayProof
\hfill\qquad
\]
where $B \leq A \in \Boxes(X)$. These rules, along with $(\forall E)$ play an analogous role to the substitution of a universally-quantified variable for an arbitrary term.

The final rule of the logic is \textit{!-box induction}, which allows us to introduce new !-boxes. For a top-level !-box $A$, we have:
\[
\hfill
  \AxiomC{$\Gamma\vdash\Kill_A(X)$}
  \AxiomC{$\Delta,X\vdash \quant{B_1} \ldots \quant{B_n} \Exp_A(X)$}
  \RightLabel{\scriptsize(Induct)}
  \BinaryInfC{$\Gamma,\Delta\vdash X$}
  \DisplayProof
\hfill\qquad
\]
where $A$ does not occur free in $\Gamma$ or $\Delta$ and $B_1$ to $B_n$ are the fresh names of children of $A$ coming from its expansion.

\section{Semantics}\label{sec:semantics}

In this section, we give a semantic interpretation for !-logic formulas using a compact closed category $\mathcal C$. For any compact closed category $\mathcal C$, a choice of valuation $\llbracket - \rrbracket : \Sigma \to \mathcal C$ of the generators in $\Sigma$ will fix a unique morphism $\llbracket G \rrbracket$ for any \textit{concrete} (i.e. !-box-free) tensor $G$. Thus $\mathcal C$ comes with an interpretation for equality between concrete tensors. From this, we can build up everything else.

For concrete tensors $G, H$, there is an obvious way to assign a truth value to the formula $G = H$:
\begin{equation}\label{eq:concrete}
  \hfill
  \llbracket G = H \rrbracket :=
\begin{cases}
  T & \textrm{ if } \llbracket G \rrbracket = \llbracket H \rrbracket \\
  F & \textrm{ otherwise}
\end{cases}
  \hfill
\end{equation}

As we first mentioned in Section~\ref{sec:!-logic_formulas}, !-logic formulas should be thought of as mappings from instantiations to truth values. Equivalently, they can be thought of as sets of instantiations: namely the set of all instantiations for which the formula holds. Applying this interpretation to atomic formulas yields the following definition:

\begin{definition}\label{def:atomic-interp}
  For an atomic !-formula $G = H$ and a valuation $\llbracket - \rrbracket : \Sigma \to \mathcal C$, we let:
  \begin{equation}\label{eq:atomic}
  \hfill
  \llbracket G = H \rrbracket = \bigg\{ i \in \Inst(\Boxes(G = H)) \ \bigg|\ \llbracket i(G) \rrbracket = \llbracket i(H) \rrbracket \bigg\}
  \hfill
  \end{equation}
\end{definition}

Concrete tensors are equal if and only if they are equal for the trivial instantiation $1$. We can interpret truth values as a special case of sets of instantiations: $T = \{ 1 \}$ and $F = \{\}$. Then, in the case of concrete tensors, \eqref{eq:atomic} reduces to \eqref{eq:concrete}.

For a forest $F$ and any $i \in \Inst(F)$, and a component $S \subseteq F$, we write $i|_S$ for the restriction of $i$ to only operations involving elements of $X$ (or fresh copies thereof). For a !-formula $X$, we write $i|_X$ for $i|_{\Boxes(X)}$. Using restrictions of instantiations, we can lift the above definition from atoms to all formulas.

\begin{definition}\label{def:interp}
  The interpretation $\sem{-}$ of a !-logic formula is defined recursively as:
  \begin{align*}
    \sem{X\wedge Y} & := \bigg\{
      \ i \in \Inst(\Boxes(X \wedge Y)) \hspace{-2.5cm}\  &
      \ \bigg| \ &
      \ i|_X \in \sem{X} \wedge i|_Y \in \sem{Y}\ 
    \bigg\} \\
    \sem{X\rightarrow Y} & := \bigg\{
      \ i \in \Inst(\Boxes(X \rightarrow Y))\hspace{-2.5cm}\  &
      \ \bigg| \ &
      \ i|_X \in \sem{X} \rightarrow i|_Y \in \sem{Y}\ 
    \bigg\} \\
    \sem{\quant{A} X} & := \bigg\{
      \ i \in \Inst(\Boxes(\quant{A} X))\hspace{-2.5cm}\  &
      \ \bigg| \ &
      \ \forall j \in \Inst(\downset{A}) \ .\  i \circ j \in \sem{X}\ 
    \bigg\}
  \end{align*}
\end{definition}

We always interpret sequents as truth values. To do so, we push all of the assumptions to the right and universally quantify over any free !-boxes:
\[
\hfill
\sem{X_1,\ldots,X_n\vdash Y} := \sem{\forall A_1 \dots \forall A_m . ((X_1\wedge\ldots\wedge X_n)\rightarrow Y)}
\hfill\qquad
\]
where $A_1, \ldots A_m$ are the free !-boxes in $X_1, \ldots, X_n, Y$.


\begin{theorem}[Soundness] \label{thm:soundness}
  If $\Gamma \vdash X$ is derivable in !L, then $\llbracket \Gamma \vdash X \rrbracket$ is true for any compact closed category $\mathcal C$.
\end{theorem}

\begin{proof}
  See Appendix~\ref{app:soundness}.
\end{proof}

The question of completeness for !L is still open. For the case of atomic !-formulas, this seems to follow straightforwardly from the fact that string diagrams (or equivalently, tensors) are sound and complete for compact closed categories. So, concrete !-tensor equations are true in all models if and only if they are identical tensors. Thus, for the case of general !-tensor equations, the problem reduces to deciding whether two !-tensors with corresponding !-boxes always have identical instances. However, once implication enters the game, we get many non-trivial formulas that hold in all models. For example, an equation with two !-boxes without edges between them always implies another equation obtained by \textit{merging} those !-boxes:
\ctikzfig{merge-example}
In this case, it is always possible to use !-box induction to prove such an implication (and many others). However, whether the rules in Section~\ref{sec:rules} suffice to get everything is a topic of continuing research.

\section{An inductive proof for non-commutative bialgebras}\label{sec:example}

In this section, we will show how !L can be used to derive highly non-trivial !-box equations using a combination of !-box induction and rewriting. Recally that a \textit{bialgebra} consists of a monoid, a comonoid, and four extra equations governing their interaction. We will extend the signature of (co)monoids to also allow for $n$-ary operations, standing for left-associated trees of multiplications and comultiplications:
\[
\hfill
\beginpgfgraphicnamed{n-ary-tree}
\begin{tikzpicture}[dotpic]
	\begin{pgfonlayer}{nodelayer}
		\node [style=white] (0) at (3.5, 1) {};
		\node [style=white] (1) at (3, 0.5) {};
		\node [style=white] (2) at (1.5, -1) {};
		\node [style=none] (3) at (2, -0.5) {};
		\node [style=none] (4) at (2.5, 0) {};
		\node [style=none] (5) at (2.25, -0.25) {...};
		\node [style=none] (6) at (6.25, -1.75) {};
		\node [style=none] (7) at (5.25, -1.75) {};
		\node [style=none] (8) at (2.25, -1.75) {};
		\node [style=none] (9) at (0.75, -1.75) {};
		\node [style=white] (10) at (-1.75, 0) {};
		\node [style=none] (11) at (-2.5, -1) {};
		\node [style=none] (12) at (-1, -1) {};
		\node [style=none] (13) at (-1.75, -1) {...};
		\node [style=none] (14) at (0, 0) {$:=$};
		\node [style=none] (15) at (-1.75, 1) {};
		\node [style=none] (16) at (3.5, 2) {};
	\end{pgfonlayer}
	\begin{pgfonlayer}{edgelayer}
		\draw (2) to (3.center);
		\draw [style=directed] (4.center) to (1);
		\draw [style=directed] (1) to (0);
		\draw [style=directed] (9.center) to (2);
		\draw [style=directed] (8.center) to (2);
		\draw [style=directed] (7.center) to (1);
		\draw [style=directed] (6.center) to (0);
		\draw [style=directed] (11.center) to (10);
		\draw [style=directed] (12.center) to (10);
		\draw [style=directed] (10) to (15.center);
		\draw [style=directed] (0) to (16.center);
	\end{pgfonlayer}
\end{tikzpicture}}
\endpgfgraphicnamed \qquad\qquad %
\beginpgfgraphicnamed{n-ary-cotree}
\begin{tikzpicture}[dotpic]
	\begin{pgfonlayer}{nodelayer}
		\node [style=gray] (0) at (3.5, -1) {};
		\node [style=gray] (1) at (3, -0.5) {};
		\node [style=gray] (2) at (1.5, 1) {};
		\node [style=none] (3) at (2, 0.5) {};
		\node [style=none] (4) at (2.5, 0) {};
		\node [style=none] (5) at (2.25, 0.25) {...};
		\node [style=none] (6) at (6.25, 1.75) {};
		\node [style=none] (7) at (5.25, 1.75) {};
		\node [style=none] (8) at (2.25, 1.75) {};
		\node [style=none] (9) at (0.75, 1.75) {};
		\node [style=gray] (10) at (-1.75, 0) {};
		\node [style=none] (11) at (-2.5, 1) {};
		\node [style=none] (12) at (-1, 1) {};
		\node [style=none] (13) at (-1.75, 1) {...};
		\node [style=none] (14) at (0, 0) {$:=$};
		\node [style=none] (15) at (-1.75, -1) {};
		\node [style=none] (16) at (3.5, -2) {};
	\end{pgfonlayer}
	\begin{pgfonlayer}{edgelayer}
		\draw (3.center) to (2);
		\draw [style=directed] (1) to (4.center);
		\draw [style=directed] (0) to (1);
		\draw [style=directed] (2) to (9.center);
		\draw [style=directed] (15.center) to (10);
		\draw [style=directed] (16.center) to (0);
		\draw [style=directed] (10) to (11.center);
		\draw [style=directed] (10) to (12.center);
		\draw [style=directed] (2) to (8.center);
		\draw [style=directed] (1) to (7.center);
		\draw [style=directed] (0) to (6.center);
	\end{pgfonlayer}
\end{tikzpicture}}
\endpgfgraphicnamed
\hfill\qquad
\]
We then assume the usual (co)monoid laws, along with the definition of a higher-arity tree:
\[
\hfill
\Gamma_M \ := \qquad
\beginpgfgraphicnamed{unit-left}
\begin{tikzpicture}[dotpic]
	\begin{pgfonlayer}{nodelayer}
		\node [style=white] (0) at (-2, 0) {};
		\node [style=wire] (1) at (-2, 1) {};
		\node [style=wire] (2) at (-1.5, -1) {};
		\node [style=white] (3) at (-2.5, -0.75) {};
		\node [style=none] (4) at (-0.75, 0) {$=$};
		\node [style=wire] (5) at (0, -1) {};
		\node [style=wire] (6) at (0, 1) {};
	\end{pgfonlayer}
	\begin{pgfonlayer}{edgelayer}
		\draw [style=directed] (0) to (1);
		\draw [style=directed] (3) to (0);
		\draw [style=directed] (2) to (0);
		\draw [style=directed] (5) to (6);
	\end{pgfonlayer}
\end{tikzpicture}}
\endpgfgraphicnamed \quad,\quad %
\beginpgfgraphicnamed{unit-right}
\begin{tikzpicture}[dotpic]
	\begin{pgfonlayer}{nodelayer}
		\node [style=white] (0) at (-2, 0) {};
		\node [style=wire] (1) at (-2, 1) {};
		\node [style=wire] (2) at (-2.5, -1) {};
		\node [style=white] (3) at (-1.5, -0.75) {};
		\node [style=none] (4) at (-0.75, 0) {$=$};
		\node [style=wire] (5) at (0, -1) {};
		\node [style=wire] (6) at (0, 1) {};
	\end{pgfonlayer}
	\begin{pgfonlayer}{edgelayer}
		\draw [style=directed] (0) to (1);
		\draw [style=directed] (3) to (0);
		\draw [style=directed] (2) to (0);
		\draw [style=directed] (5) to (6);
	\end{pgfonlayer}
\end{tikzpicture}}
\endpgfgraphicnamed \quad,\quad
\beginpgfgraphicnamed{assoc}
\begin{tikzpicture}[dotpic]
	\begin{pgfonlayer}{nodelayer}
		\node [style=white] (0) at (-2, -0.25) {};
		\node [style=white] (1) at (-1.5, 0.5) {};
		\node [style=wire] (2) at (-2.5, -1) {};
		\node [style=wire] (3) at (-1.5, -1) {};
		\node [style=wire] (4) at (-0.5, -1) {};
		\node [style=wire] (5) at (-1.5, 1.25) {};
		\node [style=none] (6) at (0, 0) {$=$};
		\node [style=wire] (7) at (1.5, -1) {};
		\node [style=white] (8) at (2, -0.25) {};
		\node [style=wire] (9) at (0.5, -1) {};
		\node [style=white] (10) at (1.5, 0.5) {};
		\node [style=wire] (11) at (1.5, 1.25) {};
		\node [style=wire] (12) at (2.5, -1) {};
	\end{pgfonlayer}
	\begin{pgfonlayer}{edgelayer}
		\draw [style=directed] (1) to (5);
		\draw [style=directed] (0) to (1);
		\draw [style=directed] (3) to (0);
		\draw [style=directed] (2) to (0);
		\draw [style=directed] (4) to (1);
		\draw [style=directed] (10) to (11);
		\draw [style=directed] (8) to (10);
		\draw [style=directed] (7) to (8);
		\draw [style=directed] (12) to (8);
		\draw [style=directed] (9) to (10);
	\end{pgfonlayer}
\end{tikzpicture}}
\endpgfgraphicnamed \quad,\quad {\color{blue} \forall A . } %
\beginpgfgraphicnamed{n-mult-rec}
\begin{tikzpicture}[dotpic]
	\begin{pgfonlayer}{nodelayer}
		\node [style=wire] (0) at (-2.5, -1.25) {};
		\node [style=none] (1) at (-3, -1.75) {};
		\node [style=none] (2) at (-2, -0.75) {};
		\node [style=wire] (3) at (-1.75, 1.75) {};
		\node [style=bbox, label={$A$}] (4) at (-3, -0.75) {};
		\node [style=none] (5) at (-2, -1.75) {};
		\node [style=white dot] (6) at (-1.75, 0.5) {};
		\node [style=none] (7) at (0, 0) {$=$};
		\node [style=wire] (8) at (-1, -1.25) {};
		\node [style=none] (9) at (2.25, -0.75) {};
		\node [style=wire] (10) at (2.5, 1.75) {};
		\node [style=white dot] (11) at (2.5, 1) {};
		\node [style=none] (12) at (2.25, -1.75) {};
		\node [style=bbox, label={$A$}] (13) at (1.25, -0.75) {};
		\node [style=wire] (14) at (1.75, -1.25) {};
		\node [style=none] (15) at (1.25, -1.75) {};
		\node [style=wire] (16) at (3.25, -1.25) {};
		\node [style=white dot] (17) at (1.75, 0.25) {};
	\end{pgfonlayer}
	\begin{pgfonlayer}{edgelayer}
		\draw [style=directed] (6) to (3);
		\draw [style=right arcin] (0) to (6);
		\draw [style=boxedge] (2.center) to (4);
		\draw [style=boxedge] (5.center) to (2.center);
		\draw [style=boxedge] (1.center) to (5.center);
		\draw [style=boxedge] (4) to (1.center);
		\draw [style=directed] (8) to (6);
		\draw [style=directed] (11) to (10);
		\draw [style=boxedge] (9.center) to (13);
		\draw [style=boxedge] (12.center) to (9.center);
		\draw [style=boxedge] (15.center) to (12.center);
		\draw [style=boxedge] (13) to (15.center);
		\draw [style=directed] (16) to (11);
		\draw [style=directed] (17) to (11);
		\draw [style=right arcin] (14) to (17);
	\end{pgfonlayer}
\end{tikzpicture}}
\endpgfgraphicnamed
\hfill\qquad
\]

\[
\hfill
\Gamma_C \ := \qquad
\beginpgfgraphicnamed{counit-left}
\begin{tikzpicture}[dotpic]
	\begin{pgfonlayer}{nodelayer}
		\node [style=gray] (0) at (-1.25, 0) {};
		\node [style=wire] (1) at (-1.25, -1) {};
		\node [style=wire] (2) at (-0.75, 1) {};
		\node [style=gray] (3) at (-1.75, 1) {};
		\node [style=none] (4) at (0, 0) {$=$};
		\node [style=wire] (5) at (0.75, 1) {};
		\node [style=wire] (6) at (0.75, -1) {};
	\end{pgfonlayer}
	\begin{pgfonlayer}{edgelayer}
		\draw [style=directed] (1) to (0);
		\draw [style=directed] (0) to (3);
		\draw [style=directed] (0) to (2);
		\draw [style=directed] (6) to (5);
	\end{pgfonlayer}
\end{tikzpicture}}
\endpgfgraphicnamed \quad,\quad %
\beginpgfgraphicnamed{counit-right}
\begin{tikzpicture}[dotpic]
	\begin{pgfonlayer}{nodelayer}
		\node [style=gray] (0) at (-1.25, 0) {};
		\node [style=wire] (1) at (-1.25, -1) {};
		\node [style=wire] (2) at (-1.75, 1) {};
		\node [style=gray] (3) at (-0.75, 1) {};
		\node [style=none] (4) at (0, 0) {$=$};
		\node [style=wire] (5) at (0.75, 1) {};
		\node [style=wire] (6) at (0.75, -1) {};
	\end{pgfonlayer}
	\begin{pgfonlayer}{edgelayer}
		\draw [style=directed] (1) to (0);
		\draw [style=directed] (0) to (3);
		\draw [style=directed] (0) to (2);
		\draw [style=directed] (6) to (5);
	\end{pgfonlayer}
\end{tikzpicture}}
\endpgfgraphicnamed \quad,\quad
\beginpgfgraphicnamed{coassoc}
\begin{tikzpicture}[dotpic]
	\begin{pgfonlayer}{nodelayer}
		\node [style=gray] (0) at (-2, 0.5) {};
		\node [style=gray] (1) at (-1.5, -0.25) {};
		\node [style=none] (2) at (-2.5, 1.25) {};
		\node [style=none] (3) at (-1.5, 1.25) {};
		\node [style=none] (4) at (-0.5, 1.25) {};
		\node [style=wire] (5) at (-1.5, -1) {};
		\node [style=none] (6) at (0, 0.25) {$=$};
		\node [style=none] (7) at (1.5, 1.25) {};
		\node [style=gray] (8) at (2, 0.5) {};
		\node [style=none] (9) at (0.5, 1.25) {};
		\node [style=gray] (10) at (1.5, -0.25) {};
		\node [style=wire] (11) at (1.5, -1) {};
		\node [style=none] (12) at (2.5, 1.25) {};
	\end{pgfonlayer}
	\begin{pgfonlayer}{edgelayer}
		\draw [style=directed] (5) to (1);
		\draw [style=directed] (1) to (0);
		\draw [style=directed] (0) to (3.center);
		\draw [style=directed] (0) to (2.center);
		\draw [style=directed] (1) to (4.center);
		\draw [style=directed] (11) to (10);
		\draw [style=directed] (10) to (8);
		\draw [style=directed] (8) to (7.center);
		\draw [style=directed] (8) to (12.center);
		\draw [style=directed] (10) to (9.center);
	\end{pgfonlayer}
\end{tikzpicture}}
\endpgfgraphicnamed \quad,\quad {\color{blue} \forall A . } %
\beginpgfgraphicnamed{n-copy-rec}
\begin{tikzpicture}[dotpic]
	\begin{pgfonlayer}{nodelayer}
		\node [style=wire] (0) at (-2.5, 1.25) {};
		\node [style=none] (1) at (-3, 0.75) {};
		\node [style=none] (2) at (-2, 1.75) {};
		\node [style=wire] (3) at (-1.75, -1.75) {};
		\node [style=bbox, label={$A$}] (4) at (-3, 1.75) {};
		\node [style=none] (5) at (-2, 0.75) {};
		\node [style=gray] (6) at (-1.75, -0.5) {};
		\node [style=none] (7) at (0, 0) {$=$};
		\node [style=wire] (8) at (-1, 1.25) {};
		\node [style=none] (9) at (2.25, 1.75) {};
		\node [style=wire] (10) at (2.5, -1.75) {};
		\node [style=gray dot] (11) at (2.5, -1) {};
		\node [style=none] (12) at (2.25, 0.75) {};
		\node [style=bbox, label={$A$}] (13) at (1.25, 1.75) {};
		\node [style=wire] (14) at (1.75, 1.25) {};
		\node [style=none] (15) at (1.25, 0.75) {};
		\node [style=wire] (16) at (3.25, 1.25) {};
		\node [style=gray] (17) at (1.75, -0.25) {};
	\end{pgfonlayer}
	\begin{pgfonlayer}{edgelayer}
		\draw [style=directed] (3) to (6);
		\draw [style=directed, arcout={{{}{0}{45}{1.5mm}{1}}}] (6) to (0);
		\draw [style=boxedge] (4) to (2.center);
		\draw [style=boxedge] (2.center) to (5.center);
		\draw [style=boxedge] (5.center) to (1.center);
		\draw [style=boxedge] (1.center) to (4);
		\draw [style=directed] (6) to (8);
		\draw [style=directed] (10) to (11);
		\draw [style=boxedge] (13) to (9.center);
		\draw [style=boxedge] (9.center) to (12.center);
		\draw [style=boxedge] (12.center) to (15.center);
		\draw [style=boxedge] (15.center) to (13);
		\draw [style=directed] (11) to (16);
		\draw [style=directed] (11) to (17);
		\draw [style=directed, arcout={{{}{0}{45}{1.5mm}{1}}}] (17) to (14);
	\end{pgfonlayer}
\end{tikzpicture}}
\endpgfgraphicnamed
\hfill\qquad
\]
For bialgebras, we start with these equations and add four more:
\[
\Gamma_{BA} \ := \quad \Gamma_M,\  \Gamma_C,\quad 
\beginpgfgraphicnamed{bialg1}
\begin{tikzpicture}[dotpic]
	\begin{pgfonlayer}{nodelayer}
		\node [style=white dot] (0) at (1.5, -0.5) {};
		\node [style=white dot] (1) at (-2.75, 0.75) {};
		\node [style=wire] (2) at (-1.25, -1.5) {};
		\node [style=gray dot] (3) at (1.5, 0.5) {};
		\node [style=wire] (4) at (-2.75, -1.5) {};
		\node [style=gray dot] (5) at (-2.75, -0.75) {};
		\node [style=wire] (6) at (2.25, 1.5) {};
		\node [style=wire] (7) at (-1.25, 1.5) {};
		\node [style=wire] (8) at (-2.75, 1.5) {};
		\node [style=wire] (9) at (0.75, 1.5) {};
		\node [style=none] (10) at (0, 0) {$=$};
		\node [style=wire] (11) at (-1.25, 1.5) {};
		\node [style=wire] (12) at (2.25, -1.5) {};
		\node [style=wire] (13) at (0.75, -1.5) {};
		\node [style=gray dot] (14) at (-1.25, -0.75) {};
		\node [style=wire] (15) at (2.25, 1.5) {};
		\node [style=white dot] (16) at (-1.25, 0.75) {};
	\end{pgfonlayer}
	\begin{pgfonlayer}{edgelayer}
		\draw [style=directed] (2) to (14);
		\draw [style=directed] (16) to (11);
		\draw [style=directed] (4) to (5);
		\draw [style=directed] (1) to (8);
		\draw [style=directed] (5) to (1);
		\draw [style=directed] (5) to (16);
		\draw [style=directed] (14) to (16);
		\draw [style=directed] (14) to (1);
		\draw [style=directed] (0) to (3);
		\draw [style=directed] (12) to (0);
		\draw [style=directed] (13) to (0);
		\draw [style=directed] (3) to (6);
		\draw [style=directed] (3) to (9);
	\end{pgfonlayer}
\end{tikzpicture}}
\endpgfgraphicnamed \quad,\quad %
\beginpgfgraphicnamed{bialg2}
\begin{tikzpicture}[dotpic]
	\begin{pgfonlayer}{nodelayer}
		\node [style=wire] (0) at (-2.25, 1) {};
		\node [style=white dot] (1) at (-1.5, -1) {};
		\node [style=gray dot] (2) at (-1.5, 0) {};
		\node [style=none] (3) at (0, 0) {$=$};
		\node [style=wire] (4) at (-0.75, 1) {};
		\node [style=wire] (5) at (1.25, 0.75) {};
		\node [style=white dot] (6) at (1.25, -0.25) {};
		\node [style=white dot] (7) at (2.25, -0.25) {};
		\node [style=wire] (8) at (2.25, 0.75) {};
	\end{pgfonlayer}
	\begin{pgfonlayer}{edgelayer}
		\draw [style=directed] (1) to (2);
		\draw [style=directed] (2) to (0);
		\draw [style=directed] (2) to (4);
		\draw [style=directed] (6) to (5);
		\draw [style=directed] (7) to (8);
	\end{pgfonlayer}
\end{tikzpicture}}
\endpgfgraphicnamed \quad,\quad
\beginpgfgraphicnamed{bialg3}
\begin{tikzpicture}[dotpic]
	\begin{pgfonlayer}{nodelayer}
		\node [style=wire] (0) at (-2.25, -1) {};
		\node [style=gray dot] (1) at (-1.5, 1) {};
		\node [style=white dot] (2) at (-1.5, 0) {};
		\node [style=none] (3) at (0, 0) {$=$};
		\node [style=wire] (4) at (-0.75, -1) {};
		\node [style=wire] (5) at (1.25, -0.5) {};
		\node [style=gray dot] (6) at (1.25, 0.5) {};
		\node [style=gray dot] (7) at (2.25, 0.5) {};
		\node [style=wire] (8) at (2.25, -0.5) {};
	\end{pgfonlayer}
	\begin{pgfonlayer}{edgelayer}
		\draw [style=directed] (2) to (1);
		\draw [style=directed] (0) to (2);
		\draw [style=directed] (4) to (2);
		\draw [style=directed] (5) to (6);
		\draw [style=directed] (8) to (7);
	\end{pgfonlayer}
\end{tikzpicture}}
\endpgfgraphicnamed \quad,\quad %
\beginpgfgraphicnamed{bialg4}
\begin{tikzpicture}[dotpic]
	\begin{pgfonlayer}{nodelayer}
		\node [style=gray dot] (0) at (-1, 0.5) {};
		\node [style=none] (1) at (0, 0) {$=$};
		\node [style=empty diagram] (2) at (2, 0) {};
		\node [style=white dot] (3) at (-1, -0.5) {};
	\end{pgfonlayer}
	\begin{pgfonlayer}{edgelayer}
		\draw [style=directed] (3) to (0);
	\end{pgfonlayer}
\end{tikzpicture}}
\endpgfgraphicnamed
\]

Now, we'll construct a (mostly) formal proof in !L that a tree of multiplications, followed by a tree of comultiplications is equal to a complete bipartite graph of comultiplications before multiplications. This rule generalises all 4 of the existing bialgebra rules, and can be expressed very succinctly using !-boxes:
\[
\hfill
\beginpgfgraphicnamed{gen-bialg-bbox}
\begin{tikzpicture}[dotpic]
	\begin{pgfonlayer}{nodelayer}
		\node [style=wire] (0) at (-1.5, -2) {};
		\node [style=white] (1) at (-1.5, -0.5) {};
		\node [style=wire] (2) at (-1.5, 2) {};
		\node [style=gray] (3) at (-1.5, 0.5) {};
		\node [style=none] (4) at (0, 0) {$=$};
		\node [style=gray] (5) at (1.75, -1.5) {};
		\node [style=wire] (6) at (1.75, -2.25) {};
		\node [style=white] (7) at (1.75, 1.5) {};
		\node [style=wire] (8) at (1.75, 2.25) {};
		\node [style=none] (9) at (-1, 2.5) {};
		\node [style=bbox] (10) at (-2, 2.5) {};
		\node [style=none] (11) at (-1, 1.5) {};
		\node [style=none] (12) at (-2, 1.5) {};
		\node [style=none] (13) at (-1, -1.5) {};
		\node [style=bbox] (14) at (-2, -1.5) {};
		\node [style=none] (15) at (-1, -2.5) {};
		\node [style=none] (16) at (-2, -2.5) {};
		\node [style=none] (17) at (2.5, -0.5) {};
		\node [style=bbox] (18) at (1, -0.5) {};
		\node [style=none] (19) at (2.5, -2.5) {};
		\node [style=none] (20) at (1, -2.5) {};
		\node [style=none] (21) at (2.5, 2.5) {};
		\node [style=bbox] (22) at (1, 2.5) {};
		\node [style=none] (23) at (2.5, 0.5) {};
		\node [style=none] (24) at (1, 0.5) {};
		\node [style=none] (25) at (1.75, 0) {};
	\end{pgfonlayer}
	\begin{pgfonlayer}{edgelayer}
		\draw [style=right arcin] (0) to (1);
		\draw [style=right arcout] (3) to (2);
		\draw [style=directed] (1) to (3);
		\draw [style=directed] (6) to (5);
		\draw [style=directed] (7) to (8);
		\draw [style=boxedge] (10) to (9.center);
		\draw [style=boxedge] (9.center) to (11.center);
		\draw [style=boxedge] (11.center) to (12.center);
		\draw [style=boxedge] (12.center) to (10);
		\draw [style=boxedge] (14) to (13.center);
		\draw [style=boxedge] (13.center) to (15.center);
		\draw [style=boxedge] (15.center) to (16.center);
		\draw [style=boxedge] (16.center) to (14);
		\draw [style=boxedge] (18) to (17.center);
		\draw [style=boxedge] (17.center) to (19.center);
		\draw [style=boxedge] (19.center) to (20.center);
		\draw [style=boxedge] (20.center) to (18);
		\draw [style=boxedge] (22) to (21.center);
		\draw [style=boxedge] (21.center) to (23.center);
		\draw [style=boxedge] (23.center) to (24.center);
		\draw [style=boxedge] (24.center) to (22);
		\draw [style=right arcin] (25.center) to (7);
		\draw [style=uright arcout] (5) to (25.center);
	\end{pgfonlayer}
\end{tikzpicture}}
\endpgfgraphicnamed
\hfill\qquad
\]
To avoid massive proof trees, we will abbreviate stacks of equational reasoning rules as sequences of rewrite steps (marked with (*)'s), suppress $\forall$-intro/elim, and write (Assm) to abbreviate using an assumption. The proof from hence forth is purely graphical.

\begin{lemma}\label{lem_whitecopy}
  \quad\ctikzfig{white_copy_thm}
\end{lemma}
\begin{proof}
  \ 
  \ctikzfig{white_copy_tree}
  \[ \hfill \textrm{(*)} \quad %
\beginpgfgraphicnamed{step_case_nolab}
\begin{tikzpicture}[dotpic]
	\begin{pgfonlayer}{nodelayer}
		\node [style=white] (0) at (-7.75, -1.25) {};
		\node [style=gray] (1) at (-7.75, -0.25) {};
		\node [style=none] (2) at (-6, 0) {$=$};
		\node [style=wire] (3) at (-8.5, 1) {};
		\node [style=wire] (4) at (6.5, 0.75) {};
		\node [style=white] (5) at (6.5, -0.25) {};
		\node [style=wire] (6) at (-7, 1) {};
		\node [style=white] (7) at (7.75, -0.25) {};
		\node [style=wire] (8) at (7.75, 0.75) {};
		\node [style=bbox] (9) at (6, 1) {};
		\node [style=none] (10) at (-9, 0.5) {};
		\node [style=none] (11) at (-8, 0.5) {};
		\node [style=none] (12) at (-8, 1.5) {};
		\node [style=none] (13) at (6, -0.75) {};
		\node [style=none] (14) at (7, -0.75) {};
		\node [style=none] (15) at (7, 1) {};
		\node [style=bbox] (16) at (-9, 1.5) {};
		\node [style=none] (17) at (-6, 0.75) {};
		\node [style=wire] (18) at (-2.75, 0.5) {};
		\node [style=gray] (19) at (-3.25, -0.5) {};
		\node [style=none] (20) at (-4.5, 0.75) {};
		\node [style=none] (21) at (-3.5, 1.75) {};
		\node [style=none] (22) at (-3.5, 0.75) {};
		\node [style=white] (23) at (-3.25, -1.5) {};
		\node [style=wire] (24) at (-4, 1.25) {};
		\node [style=bbox] (25) at (-4.5, 1.75) {};
		\node [style=gray] (26) at (-4, 0) {};
		\node [style=wire] (27) at (1, 1.25) {};
		\node [style=none] (28) at (1.5, 0.75) {};
		\node [style=gray] (29) at (1, 0) {};
		\node [style=none] (30) at (1.5, 1.75) {};
		\node [style=none] (31) at (0.5, 0.75) {};
		\node [style=white] (32) at (1, -1) {};
		\node [style=none] (33) at (-1.25, 0.75) {};
		\node [style=bbox] (34) at (0.5, 1.75) {};
		\node [style=none] (35) at (-1.25, 0) {$=$};
		\node [style=white] (36) at (2.25, 0) {};
		\node [style=wire] (37) at (2.25, 1) {};
		\node [style=none] (38) at (4, 0.75) {\footnotesize\textit{i.h.}};
		\node [style=none] (39) at (4, 0) {$=$};
	\end{pgfonlayer}
	\begin{pgfonlayer}{edgelayer}
		\draw [style=directed] (0) to (1);
		\draw [style=right arcout] (1) to (3);
		\draw [style=directed] (5) to (4);
		\draw [style=directed] (1) to (6);
		\draw [style=directed] (7) to (8);
		\draw [style=boxedge] (12.center) to (11.center);
		\draw [style=boxedge] (14.center) to (13.center);
		\draw [style=boxedge] (16) to (12.center);
		\draw [style=boxedge] (15.center) to (14.center);
		\draw [style=boxedge] (13.center) to (9);
		\draw [style=boxedge] (11.center) to (10.center);
		\draw [style=boxedge] (10.center) to (16);
		\draw [style=boxedge] (9) to (15.center);
		\draw [style=directed] (23) to (19);
		\draw [style=directed] (19) to (18);
		\draw [style=boxedge] (21.center) to (22.center);
		\draw [style=boxedge] (25) to (21.center);
		\draw [style=boxedge] (22.center) to (20.center);
		\draw [style=boxedge] (20.center) to (25);
		\draw [style=directed] (19) to (26);
		\draw [style=right arcout] (26) to (24);
		\draw [style=boxedge] (30.center) to (28.center);
		\draw [style=boxedge] (34) to (30.center);
		\draw [style=boxedge] (28.center) to (31.center);
		\draw [style=boxedge] (31.center) to (34);
		\draw [style=right arcout] (29) to (27);
		\draw [style=directed] (36) to (37);
		\draw [style=directed] (32) to (29);
	\end{pgfonlayer}
\end{tikzpicture}}
\endpgfgraphicnamed \hfill\qquad \]
\end{proof}

\begin{lemma}\label{lem_bialg}
  \quad\ctikzfig{bialg_lemma_thm}
\end{lemma}
\begin{proof}
  \ 
  \ctikzfig{bialg_lemma_tree}
  \[ \hfill \textrm{(**)} \quad %
\beginpgfgraphicnamed{bialg_lemma_step}
\begin{tikzpicture}[dotpic]
	\begin{pgfonlayer}{nodelayer}
		\node [style=none] (0) at (14.25, 1.75) {};
		\node [style=bbox] (1) at (13.25, 1.75) {};
		\node [style=none] (2) at (14.25, 0.25) {};
		\node [style=none] (3) at (13.25, 0.25) {};
		\node [style=none] (4) at (-1.5, 1.25) {};
		\node [style=none] (5) at (-1.5, 2.25) {};
		\node [style=none] (6) at (-2.5, 1.25) {};
		\node [style=bbox] (7) at (-2.5, 2.25) {};
		\node [style=gray] (8) at (15.25, -1) {};
		\node [style=gray] (9) at (13.75, -1) {};
		\node [style=gray] (10) at (-2, 0.5) {};
		\node [style=white] (11) at (13.75, 0.75) {};
		\node [style=white] (12) at (-2, -0.5) {};
		\node [style=wire] (13) at (-2, 1.75) {};
		\node [style=wire] (14) at (13.75, 1.5) {};
		\node [style=wire] (15) at (15.25, -2) {};
		\node [style=wire] (16) at (13.75, -2) {};
		\node [style=none] (17) at (12, 0) {$=$};
		\node [style=wire] (18) at (15.25, 1.5) {};
		\node [style=white] (19) at (15.25, 0.75) {};
		\node [style=gray] (20) at (3, -1.5) {};
		\node [style=none] (21) at (3.5, 2.25) {};
		\node [style=bbox] (22) at (2.5, 2.25) {};
		\node [style=white] (23) at (5, 0) {};
		\node [style=white] (24) at (3, 1) {};
		\node [style=gray] (25) at (5, -1.5) {};
		\node [style=none] (26) at (3.5, 0.5) {};
		\node [style=wire] (27) at (3, 1.75) {};
		\node [style=wire] (28) at (5, -2.25) {};
		\node [style=wire] (29) at (5, 1.25) {};
		\node [style=wire] (30) at (3, -2.25) {};
		\node [style=none] (31) at (2.5, 0.5) {};
		\node [style=gray] (32) at (3, -0.5) {};
		\node [style=gray] (33) at (4, -0.25) {};
		\node [style=white] (34) at (-0.5, -0.5) {};
		\node [style=gray] (35) at (-0.5, -1.75) {};
		\node [style=gray] (36) at (-2, -1.75) {};
		\node [style=wire] (37) at (-0.5, -2.5) {};
		\node [style=wire] (38) at (-2, -2.5) {};
		\node [style=none] (39) at (1, 0) {$=$};
		\node [style=wire] (40) at (-0.5, 1.25) {};
		\node [style=none] (41) at (1, 0.75) {\footnotesize\textit{i.h.}};
		\node [style=gray] (42) at (-5.25, -0.5) {};
		\node [style=none] (43) at (-6.5, 1) {};
		\node [style=gray] (44) at (-6, 0.25) {};
		\node [style=white] (45) at (-5.25, -1.5) {};
		\node [style=none] (46) at (-3.5, 0) {$=$};
		\node [style=wire] (47) at (-4.75, 1) {};
		\node [style=none] (48) at (-5.5, 2) {};
		\node [style=wire] (49) at (-4.75, -2.25) {};
		\node [style=none] (50) at (-5.5, 1) {};
		\node [style=wire] (51) at (-5.75, -2.25) {};
		\node [style=bbox] (52) at (-6.5, 2) {};
		\node [style=wire] (53) at (-6, 1.5) {};
		\node [style=gray] (54) at (-9.5, -0.25) {};
		\node [style=white] (55) at (-9.5, -1.25) {};
		\node [style=wire] (56) at (-9, -2) {};
		\node [style=wire] (57) at (-10, -2) {};
		\node [style=none] (58) at (-9.5, 1.75) {};
		\node [style=bbox] (59) at (-10.5, 1.75) {};
		\node [style=wire] (60) at (-9, 1.25) {};
		\node [style=none] (61) at (-9.5, 0.75) {};
		\node [style=wire] (62) at (-10, 1.25) {};
		\node [style=none] (63) at (-7.75, 0) {$=$};
		\node [style=none] (64) at (-10.5, 0.75) {};
		\node [style=white] (65) at (8.5, 1) {};
		\node [style=wire] (66) at (8.5, -2.25) {};
		\node [style=gray] (67) at (10.5, -1.5) {};
		\node [style=none] (68) at (9, 0.5) {};
		\node [style=wire] (69) at (8.5, 1.75) {};
		\node [style=none] (70) at (8, 0.5) {};
		\node [style=bbox] (71) at (8, 2.25) {};
		\node [style=gray] (72) at (8.5, -1.5) {};
		\node [style=none] (73) at (9, 2.25) {};
		\node [style=wire] (74) at (10.5, 1.25) {};
		\node [style=white] (75) at (10.5, 0) {};
		\node [style=wire] (76) at (10.5, -2.25) {};
		\node [style=gray] (77) at (8.5, -0.5) {};
		\node [style=none] (78) at (6.5, 0) {$=$};
	\end{pgfonlayer}
	\begin{pgfonlayer}{edgelayer}
		\draw [style=boxedge] (0.center) to (1);
		\draw [style=boxedge] (1) to (3.center);
		\draw [style=boxedge] (3.center) to (2.center);
		\draw [style=boxedge] (2.center) to (0.center);
		\draw [style=boxedge] (5.center) to (7);
		\draw [style=boxedge] (7) to (6.center);
		\draw [style=boxedge] (6.center) to (4.center);
		\draw [style=boxedge] (4.center) to (5.center);
		\draw [style=directed] (15) to (8);
		\draw [style=directed] (16) to (9);
		\draw [style=right arcout] (9) to (11);
		\draw [style=right arcout] (8) to (11);
		\draw [style=directed] (11) to (14);
		\draw [style=directed] (12) to (10);
		\draw [style=right arcout] (10) to (13);
		\draw [style=directed] (19) to (18);
		\draw [style=directed] (9) to (19);
		\draw [style=directed] (8) to (19);
		\draw [style=boxedge] (21.center) to (22);
		\draw [style=boxedge] (22) to (31.center);
		\draw [style=boxedge] (31.center) to (26.center);
		\draw [style=boxedge] (26.center) to (21.center);
		\draw [style=directed] (30) to (20);
		\draw [style=directed] (28) to (25);
		\draw [style=directed] (24) to (27);
		\draw [style=directed] (23) to (29);
		\draw [style=directed] (25) to (23);
		\draw [style=directed] (20) to (23);
		\draw [style=directed] (20) to (32);
		\draw [style=right arcout] (32) to (24);
		\draw [style=directed] (25) to (33);
		\draw [style=right arcout] (33) to (24);
		\draw [style=directed] (37) to (35);
		\draw [style=directed] (38) to (36);
		\draw [style=directed] (34) to (40);
		\draw [style=directed] (36) to (34);
		\draw [style=directed] (35) to (34);
		\draw [style=directed] (36) to (12);
		\draw [style=directed] (35) to (12);
		\draw [style=boxedge] (48.center) to (52);
		\draw [style=boxedge] (52) to (43.center);
		\draw [style=boxedge] (43.center) to (50.center);
		\draw [style=boxedge] (50.center) to (48.center);
		\draw [style=right arcout] (44) to (53);
		\draw [style=directed] (45) to (42);
		\draw [style=directed] (51) to (45);
		\draw [style=directed] (49) to (45);
		\draw [style=directed] (42) to (47);
		\draw [style=directed] (42) to (44);
		\draw [style=boxedge] (58.center) to (59);
		\draw [style=boxedge] (59) to (64.center);
		\draw [style=boxedge] (64.center) to (61.center);
		\draw [style=boxedge] (61.center) to (58.center);
		\draw [style=directed] (55) to (54);
		\draw [style=directed] (57) to (55);
		\draw [style=directed] (56) to (55);
		\draw [style=directed] (54) to (60);
		\draw [style=right arcout] (54) to (62);
		\draw [style=boxedge] (73.center) to (71);
		\draw [style=boxedge] (71) to (70.center);
		\draw [style=boxedge] (70.center) to (68.center);
		\draw [style=boxedge] (68.center) to (73.center);
		\draw [style=directed] (66) to (72);
		\draw [style=directed] (76) to (67);
		\draw [style=directed] (65) to (69);
		\draw [style=directed] (75) to (74);
		\draw [style=directed] (67) to (75);
		\draw [style=directed] (72) to (75);
		\draw [style=directed] (72) to (77);
		\draw [style=right arcout] (77) to (65);
		\draw [style=right arcout] (67) to (65);
	\end{pgfonlayer}
\end{tikzpicture}}
\endpgfgraphicnamed \hfill\qquad \]
\end{proof}

\begin{theorem}
  \quad\ctikzfig{gen-bialg-bbox-thm}
\end{theorem}
\begin{proof}
  \ 
  \ctikzfig{gen_bialg_tree}
  \[ \hfill \textrm{(***)} \quad %
\beginpgfgraphicnamed{bialg_bbox_step}
\begin{tikzpicture}[dotpic]
	\begin{pgfonlayer}{nodelayer}
		\node [style=bbox] (0) at (3.75, 2.75) {};
		\node [style=none] (1) at (6, 2.75) {};
		\node [style=none] (2) at (3.75, -0.25) {};
		\node [style=none] (3) at (6, -0.25) {};
		\node [style=none] (4) at (-10, 1.25) {};
		\node [style=bbox] (5) at (-10, 2.25) {};
		\node [style=none] (6) at (-9, 1.25) {};
		\node [style=none] (7) at (-9, 2.25) {};
		\node [style=gray] (8) at (4.5, -1.75) {};
		\node [style=gray] (9) at (-9.5, 0.5) {};
		\node [style=white] (10) at (5.25, 1.5) {};
		\node [style=white] (11) at (-9.5, -0.5) {};
		\node [style=wire] (12) at (-9.5, 1.75) {};
		\node [style=wire] (13) at (-8.75, -2.25) {};
		\node [style=wire] (14) at (5.25, 2.25) {};
		\node [style=wire] (15) at (4.5, -2.5) {};
		\node [style=none] (16) at (-7.75, 0) {$=$};
		\node [style=none] (17) at (5, -2.75) {};
		\node [style=none] (18) at (5, -0.75) {};
		\node [style=bbox] (19) at (4, -0.75) {};
		\node [style=none] (20) at (4, -2.75) {};
		\node [style=none] (21) at (-9.75, -2.75) {};
		\node [style=bbox] (22) at (-10.75, -1.75) {};
		\node [style=none] (23) at (-9.75, -1.75) {};
		\node [style=none] (24) at (-10.75, -2.75) {};
		\node [style=gray] (25) at (5.75, -1.75) {};
		\node [style=wire] (26) at (5.75, -2.5) {};
		\node [style=wire] (27) at (-10.25, -2.25) {};
		\node [style=none] (28) at (-5.75, -3) {};
		\node [style=none] (29) at (-5, 2.5) {};
		\node [style=white] (30) at (-5.5, -0.5) {};
		\node [style=wire] (31) at (-5.5, 2) {};
		\node [style=none] (32) at (-6, 1.5) {};
		\node [style=bbox] (33) at (-6, 2.5) {};
		\node [style=none] (34) at (-5.75, -2) {};
		\node [style=wire] (35) at (-5, -2.25) {};
		\node [style=wire] (36) at (-6.25, -2.5) {};
		\node [style=bbox] (37) at (-6.75, -2) {};
		\node [style=none] (38) at (-6.75, -3) {};
		\node [style=gray] (39) at (-5.5, 0.75) {};
		\node [style=none] (40) at (-5, 1.5) {};
		\node [style=white] (41) at (-6.25, -1) {};
		\node [style=none] (42) at (-3.25, 0) {$=$};
		\node [style=none] (43) at (-0.25, -3) {};
		\node [style=wire] (44) at (-0.75, -2.5) {};
		\node [style=wire] (45) at (0.5, -2.25) {};
		\node [style=bbox] (46) at (-1.25, -2) {};
		\node [style=none] (47) at (-1.25, -3) {};
		\node [style=white] (48) at (-0.75, -1) {};
		\node [style=none] (49) at (-0.25, -2) {};
		\node [style=white] (50) at (-0.75, 1.25) {};
		\node [style=none] (51) at (0, 2.5) {};
		\node [style=wire] (52) at (-0.75, 2) {};
		\node [style=none] (53) at (-1.5, 0.75) {};
		\node [style=bbox] (54) at (-1.5, 2.5) {};
		\node [style=none] (55) at (0, 0.75) {};
		\node [style=gray] (56) at (-0.75, 0) {};
		\node [style=gray] (57) at (0.5, 0) {};
		\node [style=none] (58) at (2.25, 0) {$=$};
		\node [style=none] (59) at (2.25, 1) {\footnotesize\textit{i.h.}};
		\node [style=white] (60) at (4.5, 0.75) {};
		\node [style=none] (61) at (7.5, 0) {$=$};
		\node [style=none] (62) at (-3.25, 1) {\footnotesize(\ref{lem_bialg})};
		\node [style=none] (63) at (4.5, -0.5) {};
		\node [style=none] (64) at (8.75, -2.5) {};
		\node [style=none] (65) at (11.25, 2.5) {};
		\node [style=white] (66) at (10.5, 1.25) {};
		\node [style=gray] (67) at (11.25, -1.25) {};
		\node [style=gray] (68) at (9.5, -1.25) {};
		\node [style=none] (69) at (11.25, 0.25) {};
		\node [style=none] (70) at (10.25, -2.5) {};
		\node [style=bbox] (71) at (9.75, 2.5) {};
		\node [style=bbox] (72) at (8.75, -0.25) {};
		\node [style=wire] (73) at (9.5, -2.25) {};
		\node [style=wire] (74) at (11.25, -2.25) {};
		\node [style=none] (75) at (10.25, -0.25) {};
		\node [style=wire] (76) at (10.5, 2.25) {};
		\node [style=none] (77) at (9.75, 0.25) {};
		\node [style=none] (78) at (10, 0) {};
	\end{pgfonlayer}
	\begin{pgfonlayer}{edgelayer}
		\draw [style=boxedge] (0) to (1.center);
		\draw [style=boxedge] (1.center) to (3.center);
		\draw [style=boxedge] (3.center) to (2.center);
		\draw [style=boxedge] (2.center) to (0);
		\draw [style=boxedge] (5) to (7.center);
		\draw [style=boxedge] (7.center) to (6.center);
		\draw [style=boxedge] (6.center) to (4.center);
		\draw [style=boxedge] (4.center) to (5);
		\draw [style=directed] (15) to (8);
		\draw [style=directed] (10) to (14);
		\draw [style=directed] (11) to (9);
		\draw [style=right arcout] (9) to (12);
		\draw [style=boxedge] (19) to (18.center);
		\draw [style=boxedge] (18.center) to (17.center);
		\draw [style=boxedge] (17.center) to (20.center);
		\draw [style=boxedge] (20.center) to (19);
		\draw [style=boxedge] (22) to (23.center);
		\draw [style=boxedge] (23.center) to (21.center);
		\draw [style=boxedge] (21.center) to (24.center);
		\draw [style=boxedge] (24.center) to (22);
		\draw [style=directed] (26) to (25);
		\draw [style=right arcout] (25) to (10);
		\draw [style=boxedge] (33) to (29.center);
		\draw [style=boxedge] (29.center) to (40.center);
		\draw [style=boxedge] (40.center) to (32.center);
		\draw [style=boxedge] (32.center) to (33);
		\draw [style=directed] (30) to (39);
		\draw [style=right arcout] (39) to (31);
		\draw [style=boxedge] (37) to (34.center);
		\draw [style=boxedge] (34.center) to (28.center);
		\draw [style=boxedge] (28.center) to (38.center);
		\draw [style=boxedge] (38.center) to (37);
		\draw [style=right arcin] (36) to (41);
		\draw [style=directed] (41) to (30);
		\draw [style=boxedge] (46) to (49.center);
		\draw [style=boxedge] (49.center) to (43.center);
		\draw [style=boxedge] (43.center) to (47.center);
		\draw [style=boxedge] (47.center) to (46);
		\draw [style=right arcin] (44) to (48);
		\draw [style=boxedge] (54) to (51.center);
		\draw [style=boxedge] (51.center) to (55.center);
		\draw [style=boxedge] (55.center) to (53.center);
		\draw [style=boxedge] (53.center) to (54);
		\draw [style=directed] (50) to (52);
		\draw [style=directed] (48) to (56);
		\draw [style=right arcout] (56) to (50);
		\draw [style=directed] (45) to (57);
		\draw [style=right arcout] (57) to (50);
		\draw [style=directed] (60) to (10);
		\draw [style=right arcin] (27) to (11);
		\draw [style=directed] (13) to (11);
		\draw [style=directed] (35) to (30);
		\draw [style=uright arcout] (8) to (63.center);
		\draw [style=right arcin] (63.center) to (60);
		\draw [style=boxedge] (71) to (65.center);
		\draw [style=boxedge] (65.center) to (69.center);
		\draw [style=boxedge] (69.center) to (77.center);
		\draw [style=boxedge] (77.center) to (71);
		\draw [style=directed] (73) to (68);
		\draw [style=directed] (66) to (76);
		\draw [style=boxedge] (72) to (75.center);
		\draw [style=boxedge] (75.center) to (70.center);
		\draw [style=boxedge] (70.center) to (64.center);
		\draw [style=boxedge] (64.center) to (72);
		\draw [style=directed] (74) to (67);
		\draw [style=directed] (67) to (66);
		\draw [style=uright arcout] (68) to (78.center);
		\draw [style=right arcin] (78.center) to (66);
	\end{pgfonlayer}
\end{tikzpicture}}
\endpgfgraphicnamed \hfill\qquad \]
\end{proof}



{\small
\bibliographystyle{akbib}
\bibliography{bibfile}
}

\newpage

\appendix

\section{Proof of soundness for !L}\label{app:soundness}

In this section, we prove Theorem~\ref{thm:soundness}, i.e. the soundness of $\sem{-}$ with respect to !L. To do so, it suffices to show that $\sem{-}$ respects each of the rules of the logic.

For $i \in \Inst(F)$ and a formula $X$ such that $\Boxes(X)$ is a component of $F$, we will write $i \vDash X$ as shorthand for $i|_X \in \sem{X}$. Using this notation, we can rewrite the interpretation as follows:
\begin{align*}
  i \vDash G = H & \iff \sem{i(G)} = \sem{i(H)} & i \in \Inst(\Boxes(G = H)) \\
  i \vDash X \wedge Y & \iff i \vDash X \wedge i \vDash Y & i \in \Inst(\Boxes(X \wedge Y)) \\
  i \vDash X \to Y & \iff i \vDash X \to i \vDash Y & i \in \Inst(\Boxes(X \to Y)) \\
  i \vDash \forall A . X & \iff \forall j \in \Inst(A^\leq) .\  i \circ j \vDash X & i \in \Inst(\Boxes(\forall A . X))
\end{align*}






Universal quantification over entire components of $\Boxes(X)$ is well-behaved for the following reason:

\begin{lemma}\label{lem:component-commute}
  For a forest $F$, let $A, B$ be elements in disinct connected components of $F$, and let $\Boxes(X) \compat F$. Then, $\Op_A(\Op'_B(X)) = \Op'_B(\Op_A(X))$ for any !-box operations $\Op_A, \Op'_B$.
\end{lemma}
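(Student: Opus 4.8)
The plan is to exploit the \emph{locality} of !-box operations: an operation $\Op_A$ touches only the part of $X$ that lies within the nesting of $A$ — the box subforest of $\Boxes(X)$ rooted at $A$, together with the diagram vertices and edge-halves sitting inside that nesting — and is the identity everywhere else, in particular on the whole connected component of $F$ containing $B$. So the first step is to make this decomposition explicit: for any $Y$ with $\Boxes(Y) \compat F$, the connected components of $F$ have pairwise disjoint node sets, and by compatibility the same holds for $\Boxes(Y)$; hence $Y$ is determined by its "$C$-regions" as $C$ ranges over the components of $F$ (the sub-box-forest $\Boxes(Y)\cap C$, the vertices whose innermost box lies in $C$, and the half-edges incident to boxes of $C$), plus the shared material lying outside every box of a component, plus the edges bridging two regions.

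The second step is to record the footprint of a generic operation. For each admissible operation (kill, copy, expand, drop), $\Op_A(Y)$ differs from $Y$ only on the $A$-region: it deletes it, duplicates it with fresh names, unfolds it once, or forgets the annotation, but in every case $Y \mapsto \Op_A(Y)$ restricts to the identity on the complement of the $A$-region, and dually for $\Op'_B$ on the $B$-region. Since $A$ and $B$ are in distinct components, the $A$-region and the $B$-region are disjoint, so $\Op_A$ and $\Op'_B$ act on complementary, non-overlapping pieces of $X$. One then concludes by comparing $\Op_A(\Op'_B(X))$ and $\Op'_B(\Op_A(X))$ region by region: on the shared material both reproduce that of $X$; on the $A$-region $\Op'_B$ is the identity, so both composites apply $\Op_A$ to the $A$-region of $X$; symmetrically on the $B$-region; and the two modified sub-box-forests are disjoint so they reassemble into the same forest in either order.

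The point requiring the most care — and the place I expect the argument to be fiddly rather than deep — is the interaction on an edge running from the interior of $A$ to the interior of $B$: copying or expanding duplicates the boundary-incident half of such an edge, and one must check that duplicating the $A$-half and then the $B$-half produces the same multiset of edges as the other order, which it does because each resulting edge is fixed by an independent choice of an $A$-copy and a $B$-copy. The remaining obstacle is purely bookkeeping: fixing a fresh-name convention so that the two composites are literally equal rather than merely isomorphic, i.e. drawing the fresh names introduced by $\Op_A$ and by $\Op'_B$ from disjoint, operation-indexed supplies so that neither operation's choices depend on whether the other was performed first. If equality of formulas is already taken up to renaming of vertices, edges, and boxes, this obstacle disappears and the region-wise comparison finishes the proof immediately.
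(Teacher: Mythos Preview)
Your proposal is correct and takes essentially the same approach as the paper: both arguments rest on the disjointness of the regions touched by $\Op_A$ and $\Op'_B$, and both identify the choice of fresh names as the only genuine obstacle. The paper's version is terser---it first reduces to atomic equations $G=H$ and resolves the naming clash by allowing adjusted freshness functions on one side (proving $\Op_{A,\fr_A}\circ\Op'_{B,\fr_B} = \Op'_{B,\fr'_B}\circ\Op_{A,\fr'_A}$ rather than literal commutation)---whereas you achieve the same effect by fixing disjoint fresh-name supplies in advance; your discussion of bridging edges is more explicit than anything in the paper but is consistent with its ``straightforward to check''.
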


\begin{proof}
  Since !-box operations recurse down to equations between !-tensors, it suffices to show that $\Op_A(\Op'_B(G = H)) = \Op'_B(\Op_A(G = H))$. Since neither $A$ nor $B$ is a child of the other, this is easy to check. The only complication is dealing with the freshness functions $\fr_A, \fr_B$ (possibly) associated with the two operations. These necessarily operate on disjoint sets of boxnames, so the only overlap might be on edgenames. However, since there is an infinite supply of fresh edgenames, it is always possible to choose new freshness functions such that $\fr_A \circ \fr_B = \fr'_B \circ \fr'_A$. Then, it is straightforward to check that $\Op_{A,\fr_A}(\Op'_{B,\fr_B}(G = H)) = \Op'_{B,\fr'_B}(\Op_{A,\fr'_A}(G = H))$.
\end{proof}

A related fact about re-ordering operations in an instantiation is that they can always be put in normal form:

\begin{lemma}\label{lem:top_op_first}
  Given an instantiation $i\in\Inst(X)$ and a top-level !-box $A\in\topboxes{X}$, $i$ can be rewritten as $i'\circ\Kill_A\circ\Exp_A^n$ where $i'\in\inst{\Kill_A\circ\Exp_A^n(X)}$.
\end{lemma}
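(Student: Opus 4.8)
The plan is to induct on the structure of the instantiation $i$, viewed as a finite composite of elementary !-box operations $\Kill_B$ and $\Exp_B$ (and the corresponding $\Copy$ steps that $\Exp$ unfolds to), reading from the outside in. The key observation is that $A$ being top-level in $X$ means $A$ is not contained in any !-box, so no operation applied to any \emph{other} box $B$ can create, destroy, or rename $A$; the box $A$ persists (possibly duplicated) until it is itself operated on. So I would first argue that among the elementary operations comprising $i$, there is a well-defined ``last'' operation $\Kill_A$ acting on the unique surviving copy of $A$, preceded by exactly $n$ applications of $\Exp_A$ (equivalently $\Copy_A$) for some $n\ge 0$ — because the only way to eliminate a top-level box from the boxnames of the final diagram is to kill it, and each $\Exp_A$ before that kill produces one more copy that must itself be resolved. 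This gives the decomposition $i = i' \circ \Kill_A \circ \Exp_A^n$ at the level of the \emph{multiset} of elementary operations; the real work is showing we may reorder so that $\Kill_A \circ \Exp_A^n$ is applied \emph{first}.

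Second, I would push $\Kill_A$ and the $\Exp_A^n$ steps to the front by repeatedly commuting them past the operations $\Op_B$ that precede them in $i$. Here there are two cases. If $B$ lies in a different connected component of the underlying forest from $A$, Lemma~\ref{lem:component-commute} applies directly and the swap $\Op_A \circ \Op'_B = \Op'_B \circ \Op_A$ is immediate (after adjusting freshness functions, exactly as in that lemma). If $B$ lies in the \emph{same} component as $A$, then since $A$ is top-level and $B \ne A$, $B$ must be a strict descendant of some box, and in particular $B$ is not an ancestor of $A$; one then checks that $\Op_B$ acts only on sub-diagrams disjoint from the wires/boxnames touched by $\Op_A$, so again the operations commute, modulo the same freshness bookkeeping. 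Care is needed when an earlier $\Exp_A$ step has already produced several copies of $A$: an intervening $\Op_B$ must be replicated across those copies (or, if $B$ was inside $A$, across the copies of $B$), so ``commuting'' really means matching one $\Op_B$ on the left against a set of parallel copies of $\Op_B$ on the right. This is the standard confluence-style diamond argument for !-box rewriting.

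Third, once all the $A$-operations have been collected at the front, I would set $i'$ to be the remaining composite of elementary operations and verify that $i' \in \inst{\Kill_A \circ \Exp_A^n(X)}$: every box that $i'$ acts on is a box of $\Kill_A \circ \Exp_A^n(X)$ (those boxes are exactly the non-$A$ boxes of $X$, since $\Kill_A$ removes $A$ and, $A$ being top-level, $\Exp_A$ does not touch any other box's name), and the operations are applied in an order compatible with the forest of $\Kill_A \circ \Exp_A^n(X)$, which is just the forest of $X$ with the subtree at $A$ deleted. Hence $i'$ is a legitimate instantiation of that formula and $i = i' \circ \Kill_A \circ \Exp_A^n$.

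The main obstacle is the bookkeeping in the second step: tracking how an operation $\Op_B$ and the freshness function $\fr_B$ behave when they must leapfrog over $\Exp_A^n$, which has already multiplied the relevant sub-diagram (and any nested copies of $B$) $n$-fold. The commutation itself is conceptually the diamond property of !-box operations on distinct boxes, and Lemma~\ref{lem:component-commute} does the cross-component case verbatim; the same-component case is where one has to be careful that ``$A$ top-level, $B\neq A$'' genuinely forces $B$ and $A$ to act on disjoint data, and that an infinite supply of fresh edgenames lets us choose the freshness functions so everything lines up, exactly as in the proof of Lemma~\ref{lem:component-commute}.
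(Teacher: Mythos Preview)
Your overall strategy---commute the $A$-operations to the right so they are applied first, then let $i'$ be what remains---is exactly the paper's approach. The cross-component case via Lemma~\ref{lem:component-commute} is also handled the same way.

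The gap is in your treatment of the same-component case. When $B$ lies in the same component as the top-level box $A$ and $B\neq A$, then $B$ is a \emph{descendant} of $A$, and your claim that ``$\Op_B$ acts only on sub-diagrams disjoint from the wires/boxnames touched by $\Op_A$'' is simply false: $\Kill_A$ deletes the entire contents of $A$, including $B$ and everything $\Op_B$ produced, and $\Exp_A$ duplicates the contents of $A$, including $B$. So the operations do \emph{not} commute in the naive sense. The paper does not argue disjointness here; instead it gives two explicit rewriting identities for $B$ a descendant of $A$:
\[
\Kill_A \circ \Op_B \;=\; \Kill_A
\qquad\text{and}\qquad
\Exp_{A,\fr} \circ \Op_B \;=\; \Op_{\fr(B)} \circ \Op_B \circ \Exp_{A,\fr},
\]
i.e.\ $\Kill_A$ \emph{absorbs} any prior $\Op_B$, while $\Exp_A$ moves right at the cost of spawning an extra $\Op_{\fr(B)}$ on the fresh copy. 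You gesture at the second of these (``an intervening $\Op_B$ must be replicated''), but you never state it precisely, and you miss the absorption identity for $\Kill_A$ entirely. Note also that $\Exp_A$ does not ``produce several copies of $A$'' as you write; it leaves one $A$ and creates a freshly-named copy of its \emph{contents}, which is why a single final $\Kill_A$ suffices. Replacing your disjointness paragraph with these two identities would make the argument go through.
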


\begin{proof}
  We need to check that operations on $A$ can always be commuted to the right, past other operations. If $B$ is not nested in $A$, this is true by Lemma~\ref{lem:component-commute}. Otherwise, $B\descof A$ and:
  \begin{itemize}
    \item If $\Op_A=\Kill_A$ then killing $A$ will erase any part of the !-formula resulting from $\Op_B$, i.e. $\Kill_A \circ \Op_B = \Kill_A$.
    \item If $\Op_A = \Exp_{A,\fr}$ then $\Exp_{A,\fr} \circ \Op_B = \Op_{\fr(B)} \circ \Op_B \circ \Exp_{A,\fr}$. In the case that $\Op_B = \Exp_B$, freshness functions on the RHS need to be chosen to produce identical names to the LHS.
  \end{itemize}
\end{proof}

\begin{notation}
  We will write $\KE^n_A$ as a shorthand for $\Kill_A\circ\Exp_A^n$.
\end{notation}

\begin{lemma}\label{lem:all-inst}
  For any !-formula $X$ and for $B_1, \ldots B_n$ the free, top-level !-boxes in $X$:
  \[
  \hfill
  \forall i \in \Inst(\Boxes(X) .\  i \vDash X \iff
  \sem{\forall B_1 \ldots \forall B_n . X} = \{ 1 \} = T
  \hfill\qquad
  \]
\end{lemma}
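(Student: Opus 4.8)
The plan is to peel the universal quantifiers off one at a time, using only the defining clause $i \vDash \forall A . Y \iff \forall j \in \Inst(A^\leq).\ i \circ j \vDash Y$ together with Lemma~\ref{lem:component-commute}. First I would observe that $\forall B_1 \ldots \forall B_n . X$ is a \emph{closed} !-formula: the free !-boxes of $X$ are exactly those lying in the subtrees $B_1^\leq, \ldots, B_n^\leq$ of its free top-level !-boxes, so $\Boxes(X) = B_1^\leq \sqcup \cdots \sqcup B_n^\leq$ (a disjoint union of trees, since distinct top-level !-boxes head distinct components), and binding all of them leaves nothing free. Hence $\Boxes(\forall B_1 \ldots \forall B_n . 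X) = \emptyset$, $\Inst(\emptyset) = \{1\}$ is the one-element set containing the trivial instantiation, and — recalling $\sem{Z} \subseteq \Inst(\Boxes(Z))$ for every formula $Z$ — we get $\sem{\forall B_1 \ldots \forall B_n . X} = \{1\} = T$ if and only if $1 \vDash \forall B_1 \ldots \forall B_n . X$.

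Next I would apply the defining clause for $i \vDash \forall A . Y$ a total of $n$ times, starting from $1 \vDash \forall B_1 \ldots \forall B_n . X$. At the $k$-th step the running composite $j_1 \circ \cdots \circ j_k$ is an instantiation of exactly $B_1^\leq \sqcup \cdots \sqcup B_k^\leq$, which is $\Boxes(\forall B_{k+1} \ldots \forall B_n . X)$, so everything type-checks (and $1 \circ j_1 = j_1$ at the start). This yields
\[
  1 \vDash \forall B_1 \ldots \forall B_n . X
  \iff
  \forall j_1 \in \Inst(B_1^\leq) \cdots \forall j_n \in \Inst(B_n^\leq).\ (j_1 \circ \cdots \circ j_n) \vDash X .
\]

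It remains to identify the right-hand side with $\forall i \in \Inst(\Boxes(X)).\ i \vDash X$, the left-hand side of the statement, and for this I would show that $(j_1, \ldots, j_n) \mapsto j_1 \circ \cdots \circ j_n$ maps $\Inst(B_1^\leq) \times \cdots \times \Inst(B_n^\leq)$ \emph{onto} $\Inst(\Boxes(X))$; one inclusion then handles the $\Leftarrow$ direction of the iterated quantifier and surjectivity handles $\Rightarrow$. One inclusion is immediate, since a composite of instantiations of sub-forests is an instantiation of the union. For surjectivity, any $i \in \Inst(\Boxes(X))$ is a sequence of !-box operations on boxes drawn from the components $B_1^\leq, \ldots, B_n^\leq$, and repeatedly invoking Lemma~\ref{lem:component-commute} lets one commute these operations so that all those inside $B_1^\leq$ come first, then those inside $B_2^\leq$, and so on, exhibiting $i = j_1 \circ \cdots \circ j_n$ with each $j_k \in \Inst(B_k^\leq)$; since such swaps only ever exchange operations lying in distinct components, the relative order within each component is untouched, so each $j_k$ is genuinely an instantiation of $B_k^\leq$, and the freshness-function bookkeeping needed for the commutations is exactly the one already discharged in Lemma~\ref{lem:component-commute}. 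This factorisation of an arbitrary instantiation through the connected components of $\Boxes(X)$ is the only real content of the argument — the first two paragraphs are just unwinding definitions — and with it the chain of equivalences closes up to give the claim.
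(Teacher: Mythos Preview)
Your argument is correct and follows essentially the same route as the paper: both reduce the claim to showing that every instantiation of $\Boxes(X)$ is (equivalent to) a composite $j_1 \circ \cdots \circ j_n$ with $j_k \in \Inst(B_k^\leq)$, and both invoke Lemma~\ref{lem:component-commute} to obtain that factorisation. The only cosmetic difference is that you first reduce to the single statement $1 \vDash \forall B_1 \ldots \forall B_n . X$ and then unfold the quantifiers in one equivalence chain, whereas the paper treats the two directions separately; the content is the same.
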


\begin{proof}
  First, assume the LHS, which is equivalent to $\sem{X} = \Inst(\Boxes{X})$. For any !-formula $Y$, if $B_k \in \Boxes(Y)^\top$ and $\sem{Y} = \Inst(\Boxes(Y)$, then $\sem{Y}$ contains all possible instantiations of $\Boxes(Y)$. In particular, it contains $i \circ j$ for any $i \in \Inst(\Boxes(\forall B_k . Y))$ and $j \in \Inst(B_k^\leq)$. Thus, $\sem{\forall B_k . Y} = \Inst(\Boxes(\forall B_k . Y))$. Iterating this implication, we have $\sem{\forall B_1 \ldots \forall B_n . X} = \Inst(\Boxes(\forall B_1 \ldots \forall B_n . X)) = \{ 1 \} = T$.

  Conversely, assume $\sem{\forall B_1 \ldots \forall B_n . X} = T$. Then every instantiation of the form $j = i_1 \circ i_2 \circ \ldots \circ i_n$, where the operations in $i_k$ only involve !-boxes in $B_k^\leq$ is in $\sem{X}$. But then, by Lemma~\ref{lem:component-commute}, we can freely commute !-box operations in distinct components of $\Boxes(X)$. So, in fact, every instantiation $i \in \Inst(\Boxes(X))$ is equivalent to an instantiation of the form of $j$. Then, since $j \in \sem{X}$, so is $i$.
\end{proof}

\begin{theorem}\label{thm:big-soundness}
  For any valuation $\sem{-} : \Sigma \to \mathcal C$, the rules (Id), (Weaken), (Perm), (Contr), ($\wedge I$), ($\wedge E_1$), ($\wedge E_2$), ($\to E$), ($\to I$), (Cut), ($\forall I$), ($\forall E$), (Refl), (Symm), (Trans), (Box), (Prod), (Ins), (Kill), (Exp), (Drop), (Copy), and (Induct) are sound with respect to $\sem{-}$.
\end{theorem}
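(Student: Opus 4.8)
The plan is to establish soundness by verifying that every rule listed preserves semantic validity: assuming the premises of a rule are valid under $\sem{-}$, we derive that the conclusion is valid as well. By Lemma~\ref{lem:all-inst}, validity of a (closed) !-formula $X$ with free top-level !-boxes $B_1,\dots,B_n$ is the same as $i \vDash X$ for every $i \in \Inst(\Boxes(X))$, so the objects actually being manipulated throughout are instantiations. The structural rules (Id), (Weaken), (Perm), (Contr) and the propositional rules ($\wedge I$), ($\wedge E_1$), ($\wedge E_2$), ($\to I$), ($\to E$), (Cut) reduce, after fixing one common instantiation of the !-boxes occurring in a sequent, to the standard natural-deduction arguments: by the displayed clauses, $i \vDash (-)$ commutes with $\wedge$ and $\to$ pointwise in $i$. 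Likewise (Refl), (Symm), (Tran) hold because, for each fixed $i$, the relation $\sem{i(G)} = \sem{i(H)}$ is an equivalence relation on the morphisms of $\mathcal C$.

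For the quantifier rules ($\forall I$) and ($\forall E$) the plan is to combine Lemma~\ref{lem:all-inst} with the clause $i \vDash \forall A . X \iff \forall j \in \Inst(A^\leq).\ i \circ j \vDash X$. Introducing a universal quantifier simply repackages a statement quantified over instantiations of the form $i \circ j$ as one quantified over $i$ alone, and elimination performs the reverse, instantiating $j$ by whatever concrete operations the rule provides. Since only top-level !-boxes are ever bound, Lemma~\ref{lem:component-commute} ensures these regroupings never interfere with operations on other connected components.

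The rules specific to !-boxes --- (Box), (Prod), (Ins), (Kill), (Exp), (Drop), (Copy) --- carry the genuinely new content. In each case the conclusion is obtained from a premise either by applying a !-box operation or by an equality that $\sem{-}$ is built to respect. The uniform strategy is: given an instantiation $i$ of the conclusion, use Lemmas~\ref{lem:top_op_first} and~\ref{lem:component-commute} to rewrite it so that the operation characterising the rule occurs last and thereby factors through an instantiation of the premise; then $i \vDash (\text{conclusion})$ follows from the assumed validity of the premise at the induced instantiation. The only semantic fact needed is that $\sem{-}$, being a valuation into $\mathcal C$, respects the equations defining how a diagram is assembled from expanded and killed !-boxes.

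The rule (Induct) will be the main obstacle. There one is given the validity of a base instance $\KE^0_A(X)$ and of a step relating $\KE^n_A(X)$ to $\KE^{n+1}_A(X)$, and must conclude $\forall A . X$. The plan: by Lemma~\ref{lem:all-inst} it suffices to prove $i \vDash \forall A . X$ for all $i$, equivalently $i \circ j \vDash X$ for every $j \in \Inst(A^\leq)$; by Lemma~\ref{lem:top_op_first} each such $j$ factors as $j' \circ \KE^n_A$ for some $n$ with $j' \in \Inst(\KE^n_A(X))$; and one then shows $i \vDash \KE^n_A(X)$ by induction on $n \in \mathbb{N}$, using the base case for $n=0$ and the step case for the successor. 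The delicate point is aligning the freshness functions generated by iterating $\Exp_A$ with the names fixed in the step-case premise; but this is exactly the renaming already licensed by Lemmas~\ref{lem:component-commute} and~\ref{lem:top_op_first}, and once the indexing is matched the induction goes through routinely.
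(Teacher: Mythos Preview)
Your proposal is correct and follows essentially the same route as the paper: reduce validity to the instantiation-level statement via Lemma~\ref{lem:all-inst}, handle the structural, propositional, and equality rules pointwise in $i$, treat the quantifier rules via the $\forall$-clause, defer the congruence rules (Box), (Prod), (Ins) to the fact that $\sem{-}$ respects diagram assembly, and prove (Induct) by factoring each instantiation as $j\circ\KE^n_A$ with Lemma~\ref{lem:top_op_first} and inducting on $n$. Two small points of alignment with the paper: for (Kill), (Exp), (Copy), (Drop) the operation is \emph{appended} to the instantiation of the conclusion to obtain one of the premise (rather than factored out of it), and for (Copy)/(Drop) the paper additionally invokes the normal-form result from \cite{NoncommBB} reducing $\Copy/\Drop$ to $\Exp/\Kill$, which your Lemmas~\ref{lem:component-commute} and~\ref{lem:top_op_first} alone do not supply.
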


\begin{proof}
  The basic structural rules just reduce to the same rules concerning instantiations. Let $K$ be the conjunction of $\Gamma$ and $K'$ the conjunction of $\Delta$ throughout. By Lemma~\ref{lem:all-inst}, to check that $\sem{\Gamma \vdash X}$ is true, it suffices to check that, for all $i \in \Inst(\Boxes(K \to X))$, $i \vDash K \to X$.

  \begin{itemize}
    \item (Ident) Fix $i \in \Inst(\Boxes(X))$. We need to show $i \in X \to X$, but this is equivalent to $i \vDash X \rightarrow i \vDash X$, which is trivially true.
    \item (Weaken) Fix $i \in \Inst(\Boxes((K \wedge X) \to Y))$ and assume $i \vDash K \to Y$. Then, if $i \vDash K \wedge X$, then $i \vDash K$. So, by assumption, $i \vDash Y$. Thus $i \vdash (K \wedge X) \to Y$.
    \item (Perm) and (Contr) follow from associativity, commutativity and idempotence of $\wedge$.
    \item ($\wedge I$) Fix $i \in \Inst(\Boxes((K \wedge K') \to (X \wedge Y)))$ and assume $i \vDash K \to X$ and $i \vDash K' \to Y$. If $i \vDash K \wedge K'$, we have $i \vDash K$ and hence $i \vDash X$. We also have $i \vDash K'$ and hence $i \vDash Y$. Thus $i \vDash X \wedge Y$.
    \item ($\wedge E1$) Fix $i \in \Inst(\Boxes(K \to X))$. Then, there exists $i' \in \Inst(\Boxes(K \to (X \wedge Y)))$ that restricts to $i$. Assume $i' \vDash K \to (X \wedge Y)$. If $i \vDash K$ then $i' \vDash K$ and hence $i' \vDash X \wedge Y$, which implies that $i' \vDash X$. So, $i \vDash X$.
    \item ($\wedge E2$) is similar to ($\wedge E1$).
    \item ($\to E$) Fix $i \in \Inst(\Boxes((K \wedge X) \to Y))$ and assume $i \vDash K \to (X \to Y)$. Then, if $i \vDash K \wedge X$ then $i \vDash K$. So, $i \vDash X \to Y$. But, since it is also the case that $i \vDash X$, $i \vDash Y$. Thus $i \vDash (K \wedge X) \to Y$.
    \item ($\to I$) is the same as ($\to E$) in reverse.
    \item (Cut) Fix $i \in \Inst(\Boxes(K \wedge K' \to X))$. Then, there exists $i' \in \Inst(\Boxes(K \to X) \cup \Boxes((K' \wedge X) \to Y))$ that restricts to $i$. Assume $i' \vDash K \to X$ and $i' \vDash (K' \wedge X) \to Y$. If $i \vDash K \wedge K'$, then $i' \vDash K \wedge K'$ so $i' \vDash K$ and $i' \vDash K'$. The former also implies that $i' \vDash X$. So, $i' \vDash K' \wedge X$ and hence $i' \vDash Y$. Finally, this implies $i \vDash Y$.
  \end{itemize}

  ($\forall I$) Fix $i \in \Inst(\Boxes(K \to \forall A . X))$. We need to show that for any $j \in \Inst(A^\leq)$, $i \circ j \vDash K \to X$. Assume without loss of generality that any !-box names on operations in $i$ are disjoint from $\rn(A^\leq)$. This is possible because $\rn(A^\leq)$ must already be disjoint from $\Boxes(\Gamma)$ (by side-condition) and it must be disjoint from $\Boxes(\forall A . X) = \Boxes(X) \backslash A^\leq$ by injectivity of $\rn$. The only other !-box names in $i$ are those introduced during instantiation, which can be freely chosen. Let $\rn(j)$ be the instantiation of $\rn(A^\leq)$ obtained by renaming operations according to $\rn$. Then, by assumption of the rule, we have $i \circ \rn(j) \vDash K \to \rn(X)$. Since $\rn$ is identity except on $A^\leq$, we have $\rn(i \circ j) \vDash \rn(K \to X)$ and thus $i \circ j \vDash K \to X$.

  ($\forall E$) Fix $i \in \Inst(\Boxes(K \to \rn(X)))$. Then suppose $i \vDash K$, then $i \vDash K$. Then, by assumption $i \vDash \forall A . X$. Let $i' = i|_{\forall A . X}$, then $i' \vDash \forall A . X$, which implies that for all $j \in \Inst(A^\leq)$, we have $i' \circ j \vDash X$. Renaming both sides yields $\rn(i' \circ j) \vDash \rn(X)$, and since $\rn$ is identity except on $A^\leq$, $i' \circ \rn(j) \vDash \rn(X)$. Now, since we are free to choose $j$, we choose it such that $(i' \circ \rn(j))|_{\rn(X)}$ is equivalent to $i|_{\rn(X)}$. Then $i \vDash \rn(X)$.

  The rules (Refl), (Symm), and (Trans) reduce to the properties of equality in $\mathcal C$. The congruence rules (Box), (Prod), and (Ins) were proven sound in~\cite{NoncommBB-long}, where the only difference here is the additional (unused) context $\Gamma$.

  (Kill) Fix $i \in \Inst(\Boxes(K \to \Kill_B(X)))$. Then if $i \vDash K$, by assumption $i \vDash \forall A . X$. Since $B \leq A$ does not occur free in $\forall A . X$, $i \circ \Kill_B \vDash \forall A . X$. For $i' = i|_{\forall A . X}$, choose $j \in \Inst(A^\leq)$ such that $(i' \circ j)|_X$ is equivalent to $(i \circ \Kill_B)|_X$. Then, $i' \circ j \vDash X$, so $i \circ \Kill_B \vDash X$, and $i \vDash \Kill_B(X)$. (Exp) is similar.

  (Copy) and (Drop) are is also similar. However, when we choose $j \in \Inst(A^\leq)$ such that $(i' \circ j)|_X$ is equivalent to $(i \circ \Copy_B)|_X$ or $(i \circ \Copy_B)|_X$, we make use of the fact that instantiations involving $\Copy/\Drop$ can always be reduced to a normal form which only includes $\Exp$ and $\Kill$. This was proven in~\cite{NoncommBB-long}.

  Finally, we prove the (Induct) rule. For any top-level !-box $A$, Lemma~\ref{lem:top_op_first} says that we can write any instantiation $i$ equivalently as $j \circ \KE^n_A$, where $j$ doesn't contain $A$. Thus, we will show that, for all $n$, and all instantiations of the form $i := j \circ \KE^n_A$, $i \vDash (K \wedge K') \to X$. We proceed by induction on $n$. 

  For the base case, $i = j \circ \Kill_A$. If $i \vDash K$, then since $K$ doesn't contain $A$, $i \vDash K$ implies $j \vDash K$. So, by the first premise $j \vDash \Kill_A(X)$. Thus $j \circ \Kill_A \vDash X$, as required. For the step case, assume that for all instantiations of $(K \wedge K') \to X$ of the form $i := j \circ \KE^n_A$, $i \vDash (K \wedge K') \to X$. We need to show for all $i' := j \circ \KE^{n+1}_A$, $i' \vDash (K \wedge K') \to X$. If $i' \vDash K \wedge K'$, then $i' \vDash K'$. Then, since $K$ doesn't contain $A$, $i \vDash K'$. Combining this with the induction hypothesis yields $i \vDash \forall B_1 \ldots \forall B_m . \Exp_A(X)$. Thus, for any instantiation $k$ of the $B_1^\leq, \ldots, B_m^\leq$, $i \circ k \vDash \Exp_A(X)$. So, $i \circ k \circ \Exp_A \vDash X$. $i'$ is equivalent to $i \circ k \circ \Exp_A$ for some $i, k$, so $i' \vDash X$.
\end{proof}

Soundness of !L with respect to $\sem{-}$ then follows from Theorem~\ref{thm:big-soundness}.

\end{document}